\documentclass{amsart}
\usepackage{amsmath}
\usepackage{amssymb}

\usepackage[all]{xy}

\DeclareMathOperator{\Env}{Env} \DeclareMathOperator{\Vol}{Vol}
\DeclareMathOperator{\val}{val} \DeclareMathOperator{\dvs}{div}
\DeclareMathOperator{\Ex}{Ex} \DeclareMathOperator{\Supp}{Supp}
\DeclareMathOperator{\Div}{Div} \DeclareMathOperator{\CDiv}{CDiv}
\DeclareMathOperator{\BigProj}{\textbf{Proj}}
\DeclareMathOperator{\Spec}{Spec}

\newtheorem{theorem}{Theorem}[section]
\newtheorem{lemma}[theorem]{Lemma}
\newtheorem{proposition}[theorem]{Proposition}
\newtheorem{corollary}[theorem]{Corollary}

\theoremstyle{definition}
\newtheorem{definition}[theorem]{Definition}

\theoremstyle{remark}
\newtheorem{remark}[theorem]{Remark}

\numberwithin{equation}{section}



\begin{document}

\title{On the Volume of Isolated Singularities}

\author{Yuchen Zhang}
\address{Department of Mathematics, University of Utah, Salt Lake City, Utah 84102}
\curraddr{155 S 1400 E Room 233, Salt Lake City, Utah 84112}
\email{yzhang@math.utah.edu}



\date{\today}


\keywords{Isolated singularities, non-$\mathbb{Q}$-Gorenstein, nef
envelope, log canonical modification}

\begin{abstract}
We give an equivalent definition of the local volume of an isolated
singularity $\Vol_\text{BdFF}(X,0)$ given in \cite{BdFF} in the
$\mathbb{Q}$-Gorenstein case and we generalize it to the
non-$\mathbb{Q}$-Gorenstein case. We prove that there is a positive
lower bound depending only on the dimension for the non-zero local
volume of an isolated singularity if $X$ is Gorenstein. We also give
a non-$\mathbb{Q}$-Gorenstein example with
$\Vol_\text{BdFF}(X,0)=0$, which does not allow a boundary $\Delta$
such that the pair $(X,\Delta)$ is log canonical.
\end{abstract}

\maketitle

\section{Introduction}

Let $X$ be a normal variety and let $f:X\rightarrow X$ be a finite
endomorphism, i.e. a finite surjective morphism of degree $>1$.  If
$X$ is projective, an abundant literature shows that the existence
of an endomorphism imposes strong restrictions on the global
geometry of $X$. In a recent paper \cite{BdFF}, Boucksom, de Fernex
and Favre introduce the volume $\Vol_{\text{BdFF}}(X,0)$ of an
isolated singularity and show that $\Vol_{\text{BdFF}}(X,0)=0$ if
there exists an endomorphism preserving the singularity. When $X$ is
$\mathbb{Q}$-Gorenstein, they show that $\Vol_{\text{BdFF}}(X,0)=0$
if and only if $X$ has log canonical singularities. For a better
understanding of $\Vol_{\text{BdFF}}(X,0)$, they propose two
problems:

\begin{description}
\item[Problem A]Does there exist a positive lower bound, only depending on the
dimension, for the volume of isolated Gorenstein singularities with
positive volume?

\item[Problem B] Is it true that $\Vol_{\text{BdFF}}(X,0)=0$
implies the existence of an effective $\mathbb{Q}$-boundary $\Delta$
such that the pair $(X,\Delta)$ is log-canonical? (The converse
being easily shown).
\end{description}

In this paper, we will give an alternative definition of the volume
in the $\mathbb{Q}$-Gorenstein case via log canonical modification
(the existence of these modifications is shown in \cite{OX}). By
using the DCC for the volume established in \cite{HMX12}, we will
give a positive answer to Problem A.

\begin{theorem}(See Theorem \ref{DCC})
There exists a positive lower bound, only depending on the
dimension, for the volume of isolated Gorenstein singularities with
positive volume.
\end{theorem}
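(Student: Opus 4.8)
The plan is to combine the description of $\Vol_{\text{BdFF}}(X,0)$ in terms of the log canonical modification (obtained above) with the DCC for volumes of log canonical pairs of \cite{HMX12}; the point at which the Gorenstein hypothesis enters is that it forces certain coefficients to be nonnegative \emph{integers}, and integers can be discarded. So let $X$ be Gorenstein with an isolated singularity at $0$ and assume $\Vol_{\text{BdFF}}(X,0)>0$. Let $\mu\colon Z\to X$ be the log canonical modification, which exists by \cite{OX}: $Z$ is normal, $E:=\Ex(\mu)=\sum_i E_i$ is reduced, $(Z,E)$ is log canonical, and $K_Z+E$ is $\mu$-ample. Since $K_X$ is Cartier we may write $K_Z+E-\mu^*K_X=\sum_i(a_i+1)E_i$ with $a_i\in\mathbb{Z}$ the discrepancy of $E_i$; by the negativity lemma applied to the $\mu$-nef, $\mu$-exceptional divisor $K_Z+E-\mu^*K_X$, the divisor $D:=-(K_Z+E-\mu^*K_X)=\sum_i d_iE_i$ is effective, so each $d_i=-(a_i+1)$ is a nonnegative integer. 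By the reformulation established in the previous section, $\Vol_{\text{BdFF}}(X,0)=-(K_Z+E-\mu^*K_X)^n$, and expanding the self-intersection gives
\[
  \Vol_{\text{BdFF}}(X,0)=(-1)^{n-1}D^n=\sum_i d_i\,\bigl((K_Z+E)|_{E_i}\bigr)^{n-1}.
\]

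The next step is adjunction along each $E_i$ with $d_i>0$. As the singularity is isolated, $\mu$ contracts $E_i$ to the point $0$, so $(K_Z+E)|_{E_i}$ is an \emph{ample} class on the projective variety $E_i$ (the restriction of a $\mu$-ample divisor to a fiber is ample). Passing to the normalization $E_i^\nu$, adjunction for the log canonical pair $(Z,E)$ shows that the pullback of $(K_Z+E)|_{E_i}$ to $E_i^\nu$ is $K_{E_i^\nu}+\Delta_i$, where $(E_i^\nu,\Delta_i)$ is a projective log canonical pair of dimension $n-1$ (the normalization of a boundary component of an lc pair is semi-log-canonical, hence log canonical since $E_i^\nu$ is normal and irreducible) with $K_{E_i^\nu}+\Delta_i$ big, being the pullback of an ample class. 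Moreover, because the part of the boundary other than $E_i$, namely $E-E_i$, is reduced, the coefficients of $\Delta_i$ lie in a DCC set $\mathfrak{D}\subseteq[0,1]$ that does not depend on $X$ (a fixed set of "different coefficients for a reduced boundary"). Hence $\bigl((K_Z+E)|_{E_i}\bigr)^{n-1}=\Vol(K_{E_i^\nu}+\Delta_i)$.

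Finally, I would invoke \cite{HMX12}: for each $d$, the set of values $\Vol(K_W+B)$, as $(W,B)$ ranges over projective log canonical pairs of dimension $d$ with $K_W+B$ big and the coefficients of $B$ in $\mathfrak{D}$, satisfies the descending chain condition; a DCC subset of $(0,\infty)$ has a positive minimum, say $\delta_d>0$, depending only on $d$. Applying this in dimension $n-1$ gives $\bigl((K_Z+E)|_{E_i}\bigr)^{n-1}\ge\delta_{n-1}$ for every $E_i$ as above. Since $\Vol_{\text{BdFF}}(X,0)>0$, the $d_i$ are not all zero, so $d_{i_0}\ge 1$ for some $i_0$ (the $d_i$ being nonnegative integers), whence
\[
  \Vol_{\text{BdFF}}(X,0)=\sum_i d_i\bigl((K_Z+E)|_{E_i}\bigr)^{n-1}\ge d_{i_0}\,\delta_{n-1}\ge\delta_{n-1}>0,
\]
a bound depending only on $n=\dim X$. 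The substantive input is \cite{HMX12} (together with the reformulation of the volume); beyond that, the main thing to carry out carefully is the adjunction step — verifying that $(E_i^\nu,\Delta_i)$ is genuinely log canonical with coefficients in a fixed DCC set even when $E_i$ is singular or non-normal — and the integrality $d_i\in\mathbb{Z}$, which is precisely the mechanism by which the Gorenstein hypothesis (as opposed to merely $\mathbb{Q}$-Gorenstein) makes the argument go through, since it permits bounding each $d_i$ below by $1$.
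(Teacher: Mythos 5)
Your proposal is correct and follows essentially the same route as the paper: pass to the log canonical modification, use the Gorenstein hypothesis to make the coefficients of $F=f^*K_X-K_Y-E_f$ nonnegative integers so that positivity of the volume forces some coefficient $\geqslant 1$, reduce to the exceptional divisor by adjunction with different coefficients in a fixed DCC set, and invoke the DCC for volumes from \cite{HMX12}. The only cosmetic differences are that the paper gets $F\geqslant E_f$ directly from Lemma \ref{exc} (every exceptional coefficient is $\geqslant 1$) and performs adjunction on the whole reduced divisor $E_f$ rather than component-by-component on normalizations.
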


We will then generalize this new definition to the
non-$\mathbb{Q}$-Gorenstein case. We will define the augmented
volume $\Vol^+(X)$ as the liminf of the $m$-th limiting volumes
$\Vol_m(X)$. We will show that the augmented volume $\Vol^+(X)$ is
greater than or equal to the volume $\Vol_{\text{BdFF}}(X,0)$. When
there exists a boundary $\Delta$ on $X$ such that the pair
$(X,\Delta)$ is log canonical, we have the following theorem

\begin{theorem}(See Corollary \ref{vol=lc})
The following statements are equivalent:
\begin{enumerate}
\item There exists a boundary $\Delta$ on $X$ such that $(X,\Delta)$
is log canonical.
\item $\Vol_m(X)=0$ for some (hence any multiple of) integer
$m\geqslant 1$.
\end{enumerate}
\end{theorem}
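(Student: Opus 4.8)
The plan is to establish the two implications separately, after recording the characterisation of the vanishing of $\Vol_m$ on which the corollary rests. Fix a log resolution $\pi\colon Y\to X$ of the singularity at $0$; for $m\geqslant 1$ let $\pi^{\natural}(mK_X)$ denote the divisorial image of the canonical map $\pi^{*}\mathcal{O}_X(mK_X)\to\mathcal{O}_Y(mK_Y)$ and set $\Theta_m:=\tfrac1m\pi^{\natural}(mK_X)$, the ``$m$-limiting pullback of $K_X$''; by the isolatedness of the singularity, $mK_Y-m\Theta_m$ is effective and $\pi$-exceptional. By the definition of $\Vol_m(X)$ through the log canonical modification of the $m$-limiting pair, $\Vol_m(X)=0$ holds exactly when there is a proper birational model $g\colon Z\to X$, with reduced $g$-exceptional divisor $\Gamma$, such that $\tfrac1m g^{\natural}(mK_X)+\Gamma$ is $\mathbb{Q}$-Cartier and $g$-semiample and $(Z,\Gamma)$ is log canonical --- informally, the $m$-limiting data of $X$ already defines a log canonical structure, with no valuation of negative $m$-limiting log discrepancy produced over the closed point. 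The ``hence any multiple'' clause I would deduce from the monotonicity $\Vol_{km}(X)\leqslant\Vol_m(X)$, itself a consequence of $\mathcal{O}_X(kmK_X)=\bigl(\mathcal{O}_X(mK_X)^{\otimes k}\bigr)^{\vee\vee}$.

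For $(1)\Rightarrow(2)$ --- the easy direction --- let $\Delta\geqslant 0$ be an effective $\mathbb{Q}$-boundary with $(X,\Delta)$ log canonical, and choose $m$ divisible enough that $m\Delta$ is integral and $\mathcal{O}_X(m(K_X+\Delta))$ is invertible. Since $\Delta\geqslant 0$ there is an inclusion of reflexive sheaves $\mathcal{O}_X(mK_X)\hookrightarrow\mathcal{O}_X(m(K_X+\Delta))$, whence $v\bigl(\mathcal{O}_X(mK_X)\bigr)\geqslant v\bigl(\mathcal{O}_X(m(K_X+\Delta))\bigr)=m\,v\bigl(\pi^{*}(K_X+\Delta)\bigr)$ for every divisorial valuation $v$ over $X$. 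Combined with the log canonicity of $(X,\Delta)$ read off on a log resolution, this comparison forces the relevant $m$-limiting log discrepancies over the closed point to be nonnegative; equivalently, the log canonical modification of the $m$-limiting pair is dominated by a crepant model of $(X,\Delta)$, so it carries no deficit and $\Vol_m(X)=0$.

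For $(2)\Rightarrow(1)$ --- the substantive direction --- assume $\Vol_m(X)=0$. Running a suitable relative minimal model program together with the existence theorem of \cite{OX}, the characterisation above yields a model $g\colon Z\to X$ with reduced $g$-exceptional divisor $\Gamma$ on which $\Theta_Z+\Gamma$ is $\mathbb{Q}$-Cartier and $g$-semiample and $(Z,\Gamma)$ is log canonical, where $\Theta_Z:=\tfrac1m g^{\natural}(mK_X)$. From this (or, more concretely, directly from $\Vol_m(X)=0$), one produces the boundary on $X$: a general member $N\geqslant 0$ of a sufficiently positive linear system $\lvert{-mK_X}+(\text{ample Cartier})\rvert$ can be chosen so that $N+mK_X$ is Cartier and $(X,\tfrac1mN)$ is log canonical --- its singularities at $0$ being precisely what the vanishing $\Vol_m(X)=0$ controls --- and we set $\Delta:=\tfrac1mN$. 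Then $K_X+\Delta=\tfrac1m(N+mK_X)$ is $\mathbb{Q}$-Cartier and $(X,\Delta)$ is log canonical, as wanted.

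The main obstacle is precisely the construction of $\Delta$ in $(2)\Rightarrow(1)$, and it has two parts. First, since $K_X$ itself is not $\mathbb{Q}$-Cartier one does not form the log canonical modification of $X$ but of the $m$-limiting pair, so one must establish that this modification exists and is crepant for the $m$-limiting class --- this is where \cite{OX} and the minimal model program enter. Second, one must convert the numerical equality $\Vol_m(X)=0$ into genuine control of the singularities of an actual boundary near $0$, i.e.\ show that no discrepancy of $(X,\Delta)$ drops below $-1$; this is where the structure result for $\Vol_m$ recorded above does its work. The remaining ingredients --- the implication $(1)\Rightarrow(2)$, the ``multiple'' clause, and the reflexive-sheaf bookkeeping --- are routine.
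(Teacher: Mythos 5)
Your proof of $(1)\Rightarrow(2)$ and the observation $\Vol_{km}(X)\leqslant\Vol_m(X)$ are both sound and track the paper's own reasoning (the valuation comparison you give is exactly what drives Proposition~\ref{arbi_delta}(1), and the monotonicity follows from $\frac{1}{m}f^\natural(mK_X)\geqslant\frac{1}{km}f^\natural(kmK_X)$).

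The direction $(2)\Rightarrow(1)$, however, has a genuine gap, and it is precisely at the spot you yourself flag as ``the substantive direction.'' Your argument hinges on a ``characterisation'' announced at the start --- that $\Vol_m(X)=0$ holds exactly when some model $g\colon Z\to X$ with reduced exceptional $\Gamma$ satisfies a $\mathbb{Q}$-Cartier/semiample/log canonical condition on $\frac1m g^\natural(mK_X)+\Gamma$. But this characterisation is never proved; you then invoke it to run an MMP and produce the model, which is circular. The subsequent step --- choosing $N$ general in $|-mK_X+(\text{ample})|$ and asserting that $(X,\frac1m N)$ is log canonical because ``its singularities at $0$ are precisely what $\Vol_m(X)=0$ controls'' --- is the entire content of the implication and is left unjustified. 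You never convert the numerical vanishing into discrepancy bounds for an actual pair.

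What makes the paper's proof work, and what is missing here, are two structural inputs that you should cite and use. First, the unconditional existence of an \emph{$m$-compatible boundary} $\Delta$ for $X$ with respect to a suitable log resolution $\tilde f$ (Theorem~\ref{compatible}, from de~Fernex--Hacon): this is precisely the general-member construction you allude to, but with the defining property $K^\Delta_{\tilde Y/X}=K_{m,\tilde Y/X}$ pinned down. Second, the identification $\Env_\mathcal{X}(A_{m,\mathcal{X}/X})=\overline{A^{\Delta}_{Y_{lc}/X}}$ (Theorem~\ref{NQGor}) for the log canonical modification $f_{lc}\colon Y_{lc}\to X$ of $(X,\Delta)$, together with the resulting formula $\Vol_m(X)=-A^\Delta_{Y_{lc}/X}\cdot(K_{Y_{lc}}+\Delta_{Y_{lc}}+E_{f_{lc}})^{n-1}$ (Corollary~\ref{vol_m}). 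With these in hand the argument is short: $\Vol_m(X)=0$ and $f_{lc}$-ampleness of $K_{Y_{lc}}+\Delta_{Y_{lc}}+E_{f_{lc}}$ force $A^\Delta_{Y_{lc}/X}=0$, i.e.\ $f_{lc}^*(K_X+\Delta)=K_{Y_{lc}}+\Delta_{Y_{lc}}+E_{f_{lc}}$, and since the pair upstairs is log canonical this descends to log canonicity of $(X,\Delta)$ on every higher model. Without Theorems~\ref{compatible} and~\ref{NQGor} you have no mechanism to pass from the intersection-theoretic vanishing to the discrepancy inequality, and that is the step your proposal elides.
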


In Section \ref{cone_sing}, we will give a counterexample to Problem
B.

\begin{theorem}(See Theorem \ref{counterexample})
There exists a polarized smooth variety $(V,H)$ such that the affine
cone $X=C(V,H)$ has positive $\Vol_m(X)$ for any positive integer
$m$, but $\Vol_{\text{BdFF}}(X,0)=0$.
\end{theorem}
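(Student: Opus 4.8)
The plan is to realize $X$ as the affine cone over a surface $V$ for which $-K_V$ is nef but has Iitaka dimension $0$ and is not semiample, and for which $K_V$ is not numerically proportional to an ample class. Concretely, take $E$ an elliptic curve and $\mathcal{E}$ the Atiyah bundle on $E$, i.e.\ the unique indecomposable rank-$2$ bundle of degree $0$ with $h^0(\mathcal{E})=1$, sitting in a non-split sequence $0\to\mathcal{O}_E\to\mathcal{E}\to\mathcal{O}_E\to 0$. Let $V=\mathbb{P}_E(\mathcal{E})$ with tautological section $C_0$ and fibre $f$; then $\operatorname{NS}(V)=\mathbb{Z}C_0\oplus\mathbb{Z}f$ with $C_0^2=f^2=0$, $C_0\cdot f=1$, $K_V\sim-2C_0$, and both the nef cone and the pseudoeffective cone of $V$ equal $\langle C_0,f\rangle$. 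Fix any very ample $H$ with $V$ projectively normal in $|H|$, say $H\equiv aC_0+bf$ with $a,b\geqslant 1$, and set $X=C(V,H)=\Spec\bigoplus_{k\geqslant 0}H^0(V,kH)$ with vertex $0$. I would first record that $X$ is not $\mathbb{Q}$-Gorenstein: $mK_X$ is Cartier iff $mK_V\sim\ell H$ for some $\ell\in\mathbb{Z}$, and already the $\operatorname{NS}$-equation $-2mC_0=\ell(aC_0+bf)$ forces $\ell=0$ and then $m=0$.

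Next I would isolate the two numerical inputs about $V$. First, $-K_V=2C_0$ is nef (it lies on the boundary of $\langle C_0,f\rangle$) but $(-K_V)^2=0$. Second, $-mK_V\cong\mathcal{O}_{\mathbb{P}}(2m)$, so $h^0(V,-mK_V)=h^0(E,\operatorname{Sym}^{2m}\mathcal{E})$; by Atiyah's classification $\operatorname{Sym}^k\mathcal{E}$ is the indecomposable bundle of rank $k+1$, degree $0$, with a one-dimensional space of sections, whence $h^0(V,-mK_V)=1$ for all $m\geqslant 1$, the unique effective member of $|{-mK_V}|$ being $2mC_0$. In particular the only effective $\mathbb{Q}$-divisor $\mathbb{Q}$-linearly equivalent to $-K_V$ is $2C_0$.

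I would then prove $\Vol_m(X)>0$ for every $m\geqslant 1$ via Corollary \ref{vol=lc}, by showing $X$ carries no boundary $\Delta$ with $(X,\Delta)$ log canonical. Under the cone correspondence such a $\Delta$ gives a boundary $\Delta_V$ on $V$ with the same coefficients, and log canonicity of $(X,\Delta)$ forces $K_X+\Delta$ to be $\mathbb{Q}$-Cartier, i.e.\ $K_V+\Delta_V\sim_{\mathbb{Q}}\mu H$; computing the discrepancy along the exceptional divisor $E\cong V$ of the blow-up $\pi\colon Y\to X$ of $0$ (so $Y$ is the total space of $\mathcal{O}_V(-H)$ via $\pi_V\colon Y\to V$, $E|_E=-H$ and $K_Y+E=\pi_V^*K_V$) gives $a(E;X,\Delta)=-1-\mu$, so $(X,\Delta)$ log canonical forces $\mu\leqslant 0$. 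If $\mu<0$ then $\Delta_V\sim_{\mathbb{Q}}-K_V-|\mu|H$ would be an effective $\mathbb{Q}$-divisor in a non-pseudoeffective class, impossible; if $\mu=0$ then $\Delta_V\sim_{\mathbb{Q}}-K_V$, so $\Delta_V=2C_0$ by the previous step, whose coefficient $2$ exceeds $1$ and hence is not a boundary. Either way no such $\Delta$ exists, so $\Vol_m(X)>0$ for all $m$.

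Finally, for $\Vol_{\text{BdFF}}(X,0)=0$ I would compute the nef envelope $\operatorname{Env}_X(-K_X)$ on the same model $Y$: since $-K_V$ is nef on $V$, the $\pi$-nef class $-(K_Y+E)=\pi_V^*(-K_V)$ realizes this envelope, and being pulled back from the lower-dimensional $V$ it has vanishing relative volume, so $\Vol_{\text{BdFF}}(X,0)=0$. Conceptually, $X$ is ``numerically log canonical'' precisely because $-K_V$ is numerically effective, whereas the failure of $-K_V$ to be semiample — the classical obstruction on a ruled surface over an elliptic curve — is exactly what blocks an honest log canonical boundary and keeps $\Vol_m(X)$ positive, so in fact $\Vol^+(X)>0=\Vol_{\text{BdFF}}(X,0)$. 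The main obstacle is this last step: one must unwind the $b$-divisor/nef-envelope definition of $\Vol_{\text{BdFF}}$ for a cone and verify that the envelope is exactly $\pi_V^*(-K_V)$, ideally by first establishing a general formula for $\Vol_{\text{BdFF}}$ of a cone singularity in terms of the volume function $t\mapsto\operatorname{vol}_V(-K_V+tH)$. Secondary points requiring care are the cone dictionary relating boundaries and $\mathbb{Q}$-Cartierness on $X$ to divisors on $V$, and the invocation of Atiyah's classification to get $h^0(V,-mK_V)=1$.
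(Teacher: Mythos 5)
Your argument that $\Vol_m(X)>0$ for every $m$ is correct and is essentially the paper's: you invoke Corollary \ref{vol=lc} and rule out a log canonical boundary by restricting to the exceptional $E\cong V$, using that the only effective $\mathbb{Q}$-divisor in the class of $-K_V=2C_0$ is $2C_0$ itself. (The paper quotes Shokurov's observation that an effective curve numerically equivalent to $mC_0$ equals $mC_0$; your Atiyah $h^0(\mathrm{Sym}^{2m}\mathcal{E})=1$ computation is the same fact in a different guise and applies to the same surface, since the Atiyah bundle is the relevant semistable bundle.)

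The second half has a genuine gap, and also a genuine error. For $\Vol_{\mathrm{BdFF}}(X,0)=0$ you observe that on the single model $Y$ one has $A_{Y/X}=0$ and $-(K_Y+E)=\pi_V^*(-K_V)$ is $f$-nef with vanishing top self-intersection, and you acknowledge that this does not by itself compute $\Env_\mathcal{X}(A_{\mathcal{X}/X})$: you would need $A_{Z/X}\geqslant 0$ on \emph{every} model $Z$, i.e.\ the zero $b$-divisor realizes the envelope, and verifying that directly is precisely the hard point you flag as ``the main obstacle.'' The paper sidesteps this entirely: it takes boundaries $\Gamma=C_\Delta$ with $\Delta\in\bigl|\frac{1}{n}\,n(-K_V+aH)\bigr|$, identifies $(Y,\pi_V^*\Delta+E)$ as the log canonical modification of $(X,C_\Delta)$, and applies Proposition \ref{arbi_delta}(2) to get $\Vol_m(X)\leqslant -(A_{Y/X}^{C_\Delta})^3=a^3H^2$ for the relevant index $m=m(a)$. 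Letting $a\to 0$ gives $\Vol^+(X)=0$, and since $\Vol^+(X)\geqslant\Vol_{\mathrm{BdFF}}(X,0)\geqslant 0$, the conclusion follows with no envelope computation. This also shows that your concluding remark ``so in fact $\Vol^+(X)>0=\Vol_{\mathrm{BdFF}}(X,0)$'' is false: $\Vol^+(X)=\liminf_m\Vol_m(X)=\lim_k\Vol_{k!}(X)$ can be (and here is) $0$ even though every term $\Vol_m(X)$ is strictly positive. That the $\Vol_m$ never vanish (Corollary \ref{vol=lc}) is compatible with their decreasing limit being $0$; the example is interesting precisely because $\Vol^+(X)=0$ while no single $\Vol_m(X)$ is zero, not because $\Vol^+(X)$ and $\Vol_{\mathrm{BdFF}}(X,0)$ differ.
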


A new idea is needed to investigate whether
$\Vol^+(X)=\Vol_{\text{BdFF}}(X,0)$.

In \cite{Ful}, Fulger introduces a different invariant
$\Vol_\text{F}(X,0)$ associated to an isolated singularity. It is
shown that $\Vol_{\text{BdFF}}(X,0)\geqslant \Vol_\text{F}(X,0)$
with equality if $X$ is $\mathbb{Q}$-Gorenstein. In \cite[Example
5.4]{BdFF}, an example is given where $\Vol_{\text{BdFF}}(X,0)>
\Vol_\text{F}(X,0)$.

It is conjectured that for a normal variety $X$ (may be not
$\mathbb{Q}$-Gorenstein) which has only isolated singularities,
there is a log canonical modification $f:Y\rightarrow X$ in the
sense that $K_Y+E_f$ is $f$-ample and $(Y,E_f)$ is log canonical. In
\cite[Proposition 2.4]{BH}, it is proved that if such modification
exists, then $\Vol_\text{BdFF}(X,0)=0$ if and only if $f$ is an
isomorphism in codimension 1.

\subsection*{Acknowledgement} The author would like to give special
thanks to Chenyang Xu for suggesting this problem and bringing the
papers \cite{OX} and \cite{Sho} to his attention. The author would
also like to thank Sebastien Boucksom, Tommaso de Fernex,
Christopher Hacon and Chenyang Xu for inspiring discussions.

\section{Preliminaries}

Throughout this paper, $X$ is a normal variety over $\mathbb{C}$.

\subsection{Valuations of $\mathbb{Q}$-divisors}

Let $X$ be a normal variety. A divisorial valuation $v$ on $X$ is a
discrete valuation of the function field of $X$ of the form
$v=q\val_F$ where $q$ is a positive integer and $F$ is a prime
divisor over $X$. Let $\mathcal{J}\subset\mathcal{K}$ be a finitely
generated sub-$\mathcal{O}_X$-module of the constant sheaf of
rational functions $\mathcal{K}=\mathcal{K}_X$ on $X$. For short, we
will refer to $\mathcal{J}$ as a fractional ideal sheaf on $X$.

The valuation $v(\mathcal{J})$ of a non-zero fractional ideal sheaf
$\mathcal{J}\subset\mathcal{K}$ along $v$ is given by
$$v(\mathcal{J})=\min\{v(\phi)|\phi\in\mathcal{J}(U),U\cap
c_X(v)\neq\emptyset\}.$$ The valuation $v(I)$ of a formal linear
combination $I=\sum a_k\cdot \mathcal{J}_k$ of fractional ideal
sheaves $\mathcal{J}_k\subset \mathcal{K}$ along $v$ is defined by
$v(I)=\sum a_k\cdot v(\mathcal{J}_k)$, where $a_k$ are real numbers.

The \textbf{$\natural$-valuation} (or \textbf{natural valuation})
along $v$ of a $\mathbb{R}$-Weil divisor $D$ on $X$ is
$v^\natural(D)=v(\mathcal{O}_X(-D))=v(\mathcal{O}_X(\lfloor
-D\rfloor))$. If $C$ is Cartier, then we have that
$v^\natural(C)=v(C)$ and $v^\natural(C+D)=v(C)+v^\natural(D)$. Note
also that, as $\mathcal{O}_X(D)\cdot \mathcal{O}_X(-D)\subseteq
\mathcal{O}_X$, we have that $v^\natural(D)+v^\natural(-D)\geqslant
0$.

To any non-trivial fractional ideal sheaf $\mathcal{J}$ on $X$, we
associate the divisor $\dvs(\mathcal{J})=\sum
\val_E^\natural(\mathcal{J})\cdot E$, where the sum is taken over
all prime divisors $E$ on $X$. Consider now a birational morphism
$f:Y\rightarrow X$ from a normal variety $Y$. For any divisor $D$ on
$X$, the \textbf{$\natural$-pullback} (or \textbf{natural pullback})
of $D$ to $Y$ is given by $f^\natural D=\dvs(\mathcal{O}_X(-D)\cdot
\mathcal{O}_Y)$. In the other words, $f^\natural D=\sum
\val_E^\natural(D)\cdot E$, where the sum is taken over all prime
divisors $E$ on $Y$. In particular, $\mathcal{O}_Y(-f^\natural
D)=(\mathcal{O}_X(-D)\cdot \mathcal{O}_Y)^{\vee\vee}$.

For every divisor $D$ on $X$ and every positive integer $m$, it is
shown in \cite[Lemma 2.8]{dFH} that $m\cdot v^\natural(D)\geqslant
v^\natural(mD)$ and
$$\inf_{k\geqslant 1}\frac{v^\natural(kD)}{k}=\liminf_{k\rightarrow
\infty}\frac{v^\natural(kD)}{k}=\lim_{k\rightarrow
\infty}\frac{v^\natural(k!D)}{k!}\in \mathbb{R}.$$ Let $D$ be a
$\mathbb{R}$-divisor on $X$. We define the \textbf{valuation along
$v$ of $D$} by $$v(D)=\lim_{k\rightarrow
\infty}\frac{v^\natural(k!D)}{k!}\in \mathbb{R}.$$

\begin{remark}
It is not hard to see that actually $$v(D)=\lim_{k\rightarrow
\infty}\frac{v^\natural(kD)}{k}\in \mathbb{R}.$$ See
\cite[Proposition 2.1]{BdFF}.
\end{remark}

\begin{remark}
Even if $D$ is a $\mathbb{Q}$-divisor on $X$, the valuation $v(D)$
may not be a rational number. See \cite[Section 3]{Urb}.
\end{remark}

If $f:Y\rightarrow X$ is a birational morphism from a normal variety
$Y$, then the \textbf{pullback of $D$} to $Y$ is defined by
$$f^*D=\sum\val_E(D)\cdot E,$$ where the sum is taken over all
prime divisors $E$ on $Y$. Notice that if $D$ is a
$\mathbb{Q}$-Cartier $\mathbb{Q}$-divisor and $m$ is a positive
integer such that $mD$ is Cartier, then $$v(D)=\frac{v(mD)}{m}
\quad\text{and}\quad f^*D=\frac{f^*(mD)}{m},$$ which coincide with
the usual valuation and pullback of $\mathbb{Q}$-Cartier
$\mathbb{Q}$-divisor. If $C$ is $\mathbb{Q}$-Cartier, then
$v(C+D)=v(C)+v(D)$ and $f^*(C+D)=f^*C+f^*D$.

\begin{lemma}\label{composition}
Let $f:Y\rightarrow X$ and $g:V\rightarrow Y$ be two birational
morphisms of normal varieties. Then, for every divisor $D$ on $X$,
the divisor $(fg)^\natural D-g^\natural(f^\natural D)$ is effective
and $g$-exceptional. Moreover, if $\mathcal{O}_X(-D)\cdot
\mathcal{O}_Y$ is an invertible sheaf, then $(fg)^\natural
D=g^\natural(f^\natural D)$. The similar statement applies to $f^*$
and $g^*$.
\end{lemma}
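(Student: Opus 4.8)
The plan is to reduce both halves of the statement to two elementary facts about fractional ideal sheaves: extension of ideals is transitive along a composition of birational morphisms, and passing to the reflexive hull alters a fractional ideal sheaf only in codimension at least two. The assertion for $f^*,g^*$ will then be deduced from the one for $f^\natural,g^\natural$ by a limiting argument.

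First I would treat the $\natural$-case. Put $\mathcal{J}=\mathcal{O}_X(-D)\cdot\mathcal{O}_Y$, a nonzero fractional ideal sheaf on $Y$. Transitivity of extension gives $\mathcal{O}_X(-D)\cdot\mathcal{O}_V=\mathcal{J}\cdot\mathcal{O}_V$, hence $(fg)^\natural D=\dvs(\mathcal{J}\cdot\mathcal{O}_V)$; on the other hand $\mathcal{O}_Y(-f^\natural D)=\mathcal{J}^{\vee\vee}$, so $g^\natural(f^\natural D)=\dvs(\mathcal{J}^{\vee\vee}\cdot\mathcal{O}_V)$. Since $\mathcal{J}\subseteq\mathcal{J}^{\vee\vee}$ we get $\mathcal{J}\cdot\mathcal{O}_V\subseteq\mathcal{J}^{\vee\vee}\cdot\mathcal{O}_V$, and as $\val_E$ reverses inclusions of fractional ideal sheaves, $(fg)^\natural D\geqslant g^\natural(f^\natural D)$ coefficientwise; this is effectivity. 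For $g$-exceptionality, note $\mathcal{J}$ and $\mathcal{J}^{\vee\vee}$ coincide outside a closed subset $Z\subseteq Y$ of codimension $\geqslant 2$, so $\mathcal{J}\cdot\mathcal{O}_V$ and $\mathcal{J}^{\vee\vee}\cdot\mathcal{O}_V$ coincide on $V\setminus g^{-1}(Z)$, and therefore the difference $(fg)^\natural D-g^\natural(f^\natural D)$ is supported on $g^{-1}(Z)$; every prime divisor there maps into $Z$ and so is $g$-exceptional. If moreover $\mathcal{J}$ is invertible then $\mathcal{J}^{\vee\vee}=\mathcal{J}$ and the two divisors are literally equal.

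Next I would pass to $f^*,g^*$ by applying the above to $k!\,D$ and dividing by $k!$. From $(fg)^\natural(k!D)\geqslant g^\natural\!\big(f^\natural(k!D)\big)$, letting $k\to\infty$ the left-hand side tends coefficientwise to $(fg)^*D$. For the right-hand side one has $\tfrac1{k!}f^\natural(k!D)\to f^*D$ by definition, and using the monotonicity and subadditivity of $v^\natural$ — hence the continuity of $v$ on $\mathbb{R}$-divisors supported on a fixed finite set, cf.\ \cite{dFH,BdFF} — one obtains $\liminf_k\tfrac1{k!}\,\val_E^\natural\!\big(f^\natural(k!D)\big)\geqslant\val_E(f^*D)$ for each prime divisor $E$ on $V$. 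Together with $\val_E^\natural(k!D)\geqslant\val_E^\natural\!\big(f^\natural(k!D)\big)$ this yields $\val_E(D)\geqslant\val_E(f^*D)$, i.e.\ effectivity of $(fg)^*D-g^*(f^*D)$. For the $g$-exceptional part, if $E$ on $V$ is not $g$-exceptional it is the strict transform of a prime divisor $E'$ of $Y$ and $\val_E=\val_{E'}$, since the local ring of $Y$ at $E'$ and that of $V$ at $E$ are discrete valuation rings of the common function field with one contained in the other, hence equal; as the coefficient of $E'$ in the $\mathbb{R}$-divisor $f^*D$ equals $\val_{E'}(D)$, a direct computation gives $\val_{E'}(f^*D)=\val_{E'}(D)$, so the $E$-coefficients on the two sides agree. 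The corresponding statement with equality follows as before once the relevant extended ideals are invertible.

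The step I expect to be the main obstacle is this last limiting argument: one must legitimately commute the limit defining $f^*$ (and $\val_E$) with the operations $\dvs$ and $(\,\cdot\,)^{\vee\vee}$, which is exactly where the inequality $m\,v^\natural(D)\geqslant v^\natural(mD)$ of \cite{dFH} and the convexity of $v$ are used. The $\natural$-case itself, and the bookkeeping of $g$-exceptional components, are comparatively formal.
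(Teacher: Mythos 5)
Your proof is correct and follows essentially the same argument that the paper simply cites from de~Fernex--Hacon (Lemma~2.7 and Remark~2.13 of \cite{dFH}): transitivity of ideal extension together with the fact that the reflexive hull differs only in codimension $\geqslant 2$ handles the $\natural$-case, and then dividing by $k!$ and using $f^\natural(k!D)\geqslant k!\,f^*D$ together with monotonicity of $\val_E^\natural$ gives the $*$-case. The only place your write-up is a bit terse is the liminf bound $\liminf_k\tfrac1{k!}\val_E^\natural\bigl(f^\natural(k!D)\bigr)\geqslant\val_E(f^*D)$, which is cleanest obtained from $\val_E^\natural\bigl(f^\natural(k!D)\bigr)\geqslant\val_E^\natural\bigl(k!\,f^*D\bigr)$ rather than by appealing to continuity, but the argument is sound.
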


\begin{proof}
See Lemma 2.7 and Remark 2.13 in \cite{dFH}.
\end{proof}

\subsection{Relative canonical divisors}

We recall that a canonical divisor $K_X$ on a normal variety $X$ is,
by definition, the (componentwise) closure of any canonical divisor
of the regular locus of $X$. We also recall that $X$ is said to be
$\mathbb{Q}$-Gorenstein if some (equivalently, every) canonical
divisor $K_X$ is $\mathbb{Q}$-Cartier. For a proper birational
morphism $f:Y\rightarrow X$ of normal varieties, we fix a canonical
divisor $K_Y$ on $Y$ such that $f_*K_Y=K_X$. For any divisor $D$ on
$X$, we will write $D_Y$ for the strict transform $f_*^{-1}D$ of $D$
on $Y$.

For every $m\geqslant 1$, the \textbf{$m$-th limiting relative
canonical $\mathbb{Q}$-divisor} $K_{m,Y/X}$ of $Y$ over $X$ is
$$K_{m,Y/X}=K_Y-\frac{1}{m}f^\natural(mK_X).$$ The \textbf{relative
canonical $\mathbb{R}$-divisor} $K_{Y/X}$ of $Y$ over $X$ is
$$K_{Y/X}=K_Y-f^*K_X.$$ Clearly, $K_{Y/X}$ is the limsup of the
$\mathbb{Q}$-divisors $K_{m,Y/X}$. A $\mathbb{Q}$-divisor $\Delta$
on $X$ is said to be a \textbf{boundary}, if $\lfloor\Delta\rfloor =
0$ and $K_X+\Delta$ is $\mathbb{Q}$-Cartier. The \textbf{log
relative canonical $\mathbb{Q}$-divisor} of $(Y,\Delta_Y)$ over
$(X,\Delta)$ is given by
$$K_{Y/X}^{\Delta}=K_Y+\Delta_Y-f^*(K_X+\Delta).$$

\begin{remark}
Our definition of the relative canonical $\mathbb{R}$-divisor is
different from the one in \cite{dFH}. In this paper, the relative
canonical $\mathbb{R}$-divisor is defined as
$K_{Y/X}=K_Y+f^*(-K_X)$. And $K_Y-f^*K_X$ is denoted by $K^-_{Y/X}$.
It can be shown that, with this notation, $K_{Y/X}\geqslant
K^-_{Y/X}$. But they are not equal in general. See \cite[Example
3.4]{dFH}.
\end{remark}

For every integer $m\geqslant 1$, the \textbf{$m$-th limiting log
discrepancy $\mathbb{Q}$-divisor} $A_{m,Y/X}$ of $Y$ over $X$ is
$$A_{m,Y/X}=K_Y+E_f-\frac{1}{m}f^\natural(mK_X),$$ where $E_f$ is
the reduced exceptional divisor of $f$. The \textbf{log discrepancy
$\mathbb{R}$-divisor} $A_{Y/X}$ of $Y$ over $X$ is
$$A_{Y/X}=K_Y+E_f-f^*K_X.$$ The \textbf{log
discrepancy $\mathbb{Q}$-divisor} of $(Y,\Delta_Y)$ over
$(X,\Delta)$ is given by
$$A_{Y/X}^{\Delta}=K_Y+\Delta_Y+E_f-f^*(K_X+\Delta).$$

Consider a pair $(X,I=\sum a_k\cdot \mathcal{J}_k)$ where
$\mathcal{J}_k$ are non-zero fractional ideal sheaves on $X$ and
$a_k$ are real numbers. A \textbf{log resolution} of $(X,I)$ is a
proper birational morphism $f:Y\rightarrow X$ from a smooth variety
$Y$ such that for every $k$ the sheaf $\mathcal{J}_k\cdot
\mathcal{O}_Y$ is the invertible sheaf corresponding to a divisor
$E_k$ on $Y$, the exceptional locus $\Ex(f)$ of $f$ is also a
divisor, and $\Ex(f)\cup E$ has simple normal crossing, where
$E=\bigcup \Supp(E_k)$. If $\Delta$ is a boundary on $X$, then a log
resolution of the log pair $((X,\Delta),I)$ is given by a log
resolution $f:Y\rightarrow X$ of $(X,I)$ such that $\Ex(f)\cup E\cup
\Supp(f^*(K_X+\Delta))$ has simple normal crossings. The log
resolution always exists (see \cite[Theorem 4.2]{dFH}).

Let $X$ be a normal variety, and fix an integer $m\geqslant 2$.
Given a log resolution $f:Y\rightarrow X$ of
$(X,\mathcal{O}_X(-mK_X))$, a boundary $\Delta$ on $X$ is said to be
\textbf{$m$-compatible} for $X$ with respect to $f$ if:
\begin{enumerate}
\item $m\Delta$ is integral and $\lfloor\Delta\rfloor=0$,
\item $f$ is a log resolution for the log pair
$((X,\Delta);\mathcal{O}_X(-mK_X))$, and
\item $K_{Y/X}^{\Delta}=K_{m,Y/X}$.
\end{enumerate}

\begin{theorem}\label{compatible}
For any normal variety $X$, any integer $m\geqslant 2$ and any log
resolution $f:Y\rightarrow X$ of $(X,\mathcal{O}_X(-mK_X))$, there
exists an $m$-compatible boundary $\Delta$ for $X$ with respect to
$f$.
\end{theorem}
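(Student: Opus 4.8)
The plan is to construct the $m$-compatible boundary $\Delta$ directly from the data of the log resolution $f:Y\to X$, forcing condition (3) to hold by choosing $\Delta$ so that its strict transform on $Y$ compensates exactly for the difference between $K_{m,Y/X}$ and $K_{Y/X}^{\Delta}$. First I would write out this difference: since $K_{m,Y/X}=K_Y-\tfrac1m f^\natural(mK_X)$ and $K_{Y/X}^{\Delta}=K_Y+\Delta_Y-f^*(K_X+\Delta)$, condition (3) amounts to $\Delta_Y - f^*(K_X+\Delta) = -\tfrac1m f^\natural(mK_X)$. The natural candidate is to let $mK_X+m\Delta$ be linearly equivalent to a general member of the (mobile part of the) linear system $|\mathcal{O}_X(-mK_X)|^{-1}$ near the relevant locus; more precisely, I would take $\Delta=\tfrac1m H$ where $H$ is a general effective divisor such that $mK_X+H\sim 0$ (equivalently $H\in|{-mK_X}|$ in the Weil-divisorial sense, using that $\mathcal{O}_X(-mK_X)$ is a nonzero fractional ideal sheaf so such $H$ exists after twisting by a suitable Cartier divisor), chosen generally enough that $\lfloor\Delta\rfloor=0$, that $f$ remains a log resolution of $((X,\Delta);\mathcal{O}_X(-mK_X))$, and that $\Supp(f^*(K_X+\Delta))$ meets the exceptional locus transversally.

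The key point to verify is then that this choice makes $K_{Y/X}^{\Delta}=K_{m,Y/X}$ on the nose. Here I would use that $m(K_X+\Delta)=mK_X+H$ is Cartier (indeed trivial as a line bundle in a neighbourhood, by construction $mK_X+H\sim 0$), so $f^*(K_X+\Delta)=\tfrac1m f^*(m(K_X+\Delta))=\tfrac1m f^\natural(m(K_X+\Delta))$, the last equality because pullback and natural pullback agree for Cartier divisors. Expanding, $f^\natural(m(K_X+\Delta))=f^\natural(mK_X+H)$; since $H$ is a general member, $\mathcal{O}_X(-mK_X)\cdot\mathcal{O}_Y$ and $\mathcal{O}_X(-mK_X-H)\cdot\mathcal{O}_Y = \mathcal{O}_X(H)\cdot\mathcal{O}_X(-mK_X-H)\cdot\mathcal{O}_Y$ differ exactly by the strict transform $H_Y=m\Delta_Y$ plus the contribution of the base locus, which is absorbed into $f^\natural(mK_X)$. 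This yields $f^\natural(m(K_X+\Delta)) = f^\natural(mK_X)+m\Delta_Y$, hence $f^*(K_X+\Delta)=\tfrac1m f^\natural(mK_X)+\Delta_Y$, which is precisely condition (3) after rearranging. Conditions (1) and (2) are arranged by the genericity of $H$ and by Bertini-type/log-resolution existence statements already quoted from \cite{dFH}.

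The main obstacle I anticipate is making the generic-choice argument rigorous when $X$ is not $\mathbb{Q}$-Gorenstein and $-mK_X$ need not be globally generated or even effective: the fractional ideal sheaf $\mathcal{O}_X(-mK_X)$ can fail to be an honest ideal sheaf, so "general member of $|{-mK_X}|$" must be interpreted locally, twisting by an auxiliary very ample Cartier divisor $A$ to reduce to the base-point-free situation for $\mathcal{O}_X(-mK_X+A)$, taking a general section there, and then subtracting $A$ back off — one must check this local-to-global juggling does not disturb conditions (1)–(3) and that the exceptional behaviour of $f$ (already fixed) stays compatible. The second delicate point is the simple-normal-crossings requirement in (2): since $f$ is fixed in advance, I cannot blow up further to fix crossings, so the general member $H$ must be chosen inside the linear system \emph{after} pulling back to $Y$, i.e. I would choose a general divisor in the relevant mobile linear system on $Y$ and push it down; transversality to the fixed SNC divisor $\Ex(f)\cup E$ is then the content of a relative Bertini argument over $Y$. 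Once these choices are pinned down, the verification of (3) is the short computation sketched above.
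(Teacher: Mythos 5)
The paper gives no proof of its own for this statement; it simply cites \cite[Theorem 5.4]{dFH}, and your proposal is a faithful reconstruction of exactly the argument used there: set $\Delta=\tfrac1m H$ for a general effective Weil divisor $H$ with $mK_X+H$ Cartier, chosen so that $f^\natural(mK_X+H)=f^\natural(mK_X)+H_Y$, from which $K_{Y/X}^\Delta=K_{m,Y/X}$ follows by the computation you give. So the approach is the same as the one the paper is appealing to. One small simplification worth noting: your two ``delicate points'' collapse into one and the ample twist is superfluous. Since $f$ already makes $L=\mathcal{O}_X(-mK_X)\cdot\mathcal{O}_Y$ invertible and (over each affine open $U\subseteq X$) globally generated by the images of sections of $\mathcal{O}_X(-mK_X)(U)$, you can pick a general member $B\in|L|_{f^{-1}(U)}|$ directly on $Y$ by Bertini; then $B$ is reduced, contains no $f$-exceptional component, meets $\Ex(f)\cup E$ transversally, and setting $H=f_*B$ automatically yields an effective Weil divisor with $mK_X+H$ principal, $H_Y=B$, and $f^\natural(mK_X+H)=f^\natural(mK_X)+H_Y$. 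This handles the possible non-effectivity of $-mK_X$ without twisting by $A$ and disposes of the SNC requirement in condition~(2) simultaneously.
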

\begin{proof}
See \cite[Theorem 5.4]{dFH}.
\end{proof}

\subsection{Shokurov's $b$-divisors}

Let $X$ be a normal variety. The set of all proper birational
morphisms $\pi:X_\pi\rightarrow X$ from a normal variety $X_\pi$
modulo isomorphism is (partially) ordered by $\pi'\geqslant \pi$ if
and only if $\pi'$ factors through $\pi$, and any two proper
birational morphisms can be dominated by a third one. The
\textbf{Riemann-Zariski space} $X$ is defined as the projective
limit, $\mathcal{X}=\varprojlim_\pi X_\pi$. The group of
\textbf{Weil $b$-divisors} over $X$ is defined as
$\Div(\mathcal{X})=\varprojlim_\pi \Div(X_\pi)$, where $\Div(X_\pi)$
denotes the group of Weil divisors on $X_\pi$ and the limit is taken
with respect to the push-forwards. The group of \textbf{Cartier
$b$-divisors} over $X$ is defined as
$\CDiv(\mathcal{X})=\varinjlim_\pi \CDiv(X_\pi)$, where
$\CDiv(X_\pi)$ denotes the group of Cartier divisors on $X_\pi$ and
the limit is taken with respect to the pull-backs. An element in
$\Div_\mathbb{R}(\mathcal{X})=\Div(\mathcal{X})\otimes \mathbb{R}$
(resp. $\CDiv_\mathbb{R}(\mathcal{X})=\CDiv(\mathcal{X})\otimes
\mathbb{R}$) will be called an $\mathbb{R}$-Weil $b$-divisor (resp.
$\mathbb{R}$-Cartier $b$-divisor), and similarly with $\mathbb{Q}$
in place of $\mathbb{R}$. Clearly, a Weil $b$-divisor $W$ over $X$
consists of a family of Weil divisors $W_\pi\in \Div(X_\pi)$ that
are compatible under push-forward. We say that $W_\pi$ is the
\textbf{trace} of $W$ on the model $X_\pi$. Let $C$ be an Cartier
$b$-divisor. We say that $\pi:X_\pi\rightarrow X$ is a
\textbf{determination} of $C$, if $C$ can be obtained by pulling
back $C_\pi$ to models dominating $\pi$ and pushing forward to other
models, in which case we denote $C=\overline{C_\pi}$.

Let $Z$ and $W$ be two $\mathbb{R}$-Weil $b$-divisors over $X$. We
say that $Z\leqslant W$, if for any model $\pi:X_\pi\rightarrow X$
we have $Z_\pi\leqslant W_\pi$. We say an $\mathbb{R}$-Cartier
$b$-divisor is \textbf{relatively nef} over $X$, if its trace is
relatively nef on one (hence any sufficiently high) determination.
An $\mathbb{R}$-Weil $b$-divisor $W$ is \textbf{relatively nef} over
$X$ if and only if there is a sequence of relatively nef
$\mathbb{R}$-Cartier $b$-divisors over $X$ such that the traces
converge to the trace of $W$ in the numerical class over $X$ on each
model. We will need the following variant of the Negativity Lemma in
the future.

\begin{lemma}\label{negativity}
Let $W$ be an relatively nef $\mathbb{R}$-Weil $b$-divisor over $X$.
Let $\pi:X_\pi\rightarrow X$ and $\pi':X_{\pi'}\rightarrow X$ be two
models over $X$ such that $\pi'$ factor through $\pi$ via
$\rho:X_{\pi'}\rightarrow X_\pi$. Then $W_{\pi'}\leqslant
-\rho^*(-W_\pi)$.
\end{lemma}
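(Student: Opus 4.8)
The plan is to reduce the statement to the classical Negativity Lemma by approximating the relatively nef $\mathbb{R}$-Weil $b$-divisor $W$ by relatively nef $\mathbb{R}$-Cartier $b$-divisors. First I would fix the two models $\pi:X_\pi\to X$ and $\pi':X_{\pi'}\to X$ together with the dominating morphism $\rho:X_{\pi'}\to X_\pi$, and recall what has to be shown: on the model $X_{\pi'}$ we must have $W_{\pi'}\leqslant -\rho^*(-W_\pi)$, i.e. the effective $\rho$-exceptional correction term $-\rho^*(-W_\pi)-W_{\pi'}$ is $\geqslant 0$. Here $-\rho^*(-W_\pi)$ makes sense because $-W_\pi$ is a genuine Weil divisor on $X_\pi$ and we take the natural (reflexive) pullback discussed in the Preliminaries, then negate; the quantity $-\rho^*(-W_\pi)$ is the largest $\mathbb{R}$-divisor on $X_{\pi'}$ whose pushforward to $X_\pi$ is $\leqslant W_\pi$ and which agrees with $W_\pi$ up to $\rho$-exceptional divisors.

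Next I would invoke the definition of relative nefness for a Weil $b$-divisor: there is a sequence $C^{(j)}$ of relatively nef $\mathbb{R}$-Cartier $b$-divisors over $X$ whose traces converge, in the relative numerical class on each fixed model, to the trace of $W$ on that model. Passing to a common determination dominating $X_{\pi'}$ (and hence $X_\pi$), I may assume each $C^{(j)}$ is determined on a model $q:Z\to X_{\pi'}$, so its trace $C^{(j)}_Z$ on $Z$ is relatively nef over $X$, and in particular relatively nef over $X_\pi$ via the composite $Z\to X_{\pi'}\xrightarrow{\rho}X_\pi$. For each fixed $j$, the classical Negativity Lemma applied to the birational morphism $Z\to X_\pi$ and the $C^{(j)}_Z$-nef situation gives that $q^*C^{(j)}_{X_{\pi'}}$ differs from $C^{(j)}_Z$ by an effective $(Z\to X_{\pi'})$-exceptional divisor, and more to the point, comparing traces over $X_\pi$ yields $C^{(j)}_{X_{\pi'}}\leqslant -\rho^*(-C^{(j)}_{X_\pi})$ on $X_{\pi'}$: indeed for a relatively nef Cartier $b$-divisor one has $C^{(j)}_{X_{\pi'}}=\rho^*C^{(j)}_{X_\pi}$ already if the models are high enough, and in general the pullback/pushforward compatibility of Cartier $b$-divisors together with $-\rho^*(-D)\geqslant \rho^*D$ for the natural pullback forces the inequality. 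Finally I would let $j\to\infty$: the traces $C^{(j)}_{X_{\pi'}}$ converge numerically to $W_{\pi'}$ and $C^{(j)}_{X_\pi}$ converge to $W_\pi$; since $-\rho^*(-\,\cdot\,)$ is monotone and continuous in the appropriate sense on the trace on the fixed model $X_{\pi'}$ (it only depends on the coefficients of $W_\pi$ along the finitely many divisors of $X_\pi$, computed via $\val^\natural$), the inequality $C^{(j)}_{X_{\pi'}}\leqslant -\rho^*(-C^{(j)}_{X_\pi})$ passes to the limit to give $W_{\pi'}\leqslant -\rho^*(-W_\pi)$.

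The main obstacle I anticipate is the limiting step, specifically making rigorous the claim that $-\rho^*(-\,\cdot\,)$ behaves continuously under the numerical convergence of traces. Numerical convergence on $X_\pi$ a priori controls only intersection numbers, not the divisor classes themselves, so one must argue that the relevant coefficients — those of the $\rho$-exceptional divisors on $X_{\pi'}$ that enter $-\rho^*(-W_\pi)$ — are determined by, and vary continuously with, actual coefficients of $W_\pi$ rather than merely its numerical class; this is where I would use that $W$ is a $b$-divisor (so the $W_\pi$ are honest divisors, compatible under pushforward) and that the approximating $C^{(j)}$ can be chosen with $C^{(j)}_{X_\pi}\to W_\pi$ coefficient-wise, not just numerically, by replacing the given sequence with a suitable subsequence or convex combination. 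A secondary technical point is checking that the classical Negativity Lemma indeed delivers the stated inequality $C_{X_{\pi'}}\leqslant -\rho^*(-C_{X_\pi})$ for a single relatively nef Cartier $b$-divisor $C$; this should follow by writing $C=\overline{C_\sigma}$ for some determination $\sigma$ dominating both models and chasing the definitions, but it deserves a careful line.
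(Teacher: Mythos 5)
The paper does not give an argument here; it simply cites \cite[Proposition 2.12]{BdFF}, so what you have written is a reconstruction of a proof the paper omits. Your overall strategy --- approximate $W$ by relatively nef $\mathbb{R}$-Cartier $b$-divisors $C^{(j)}$, establish the inequality for each $C^{(j)}$ via the classical Negativity Lemma, and pass to the limit --- is the natural one and is indeed how one expects \cite{BdFF} to proceed. However, as written there are two genuine gaps. In the Cartier base case, for $C=\overline{C_\sigma}$ with determination $\sigma\geqslant\pi'$ you need $C_{\pi'}\leqslant -\rho^*(-C_\pi)$, but your first justification (``$C_{\pi'}=\rho^*C_\pi$ already if the models are high enough'') is beside the point: $\pi$ and $\pi'$ are fixed in the lemma and may lie strictly below $\sigma$, in which case $C_\pi$ and $C_{\pi'}$ are push-forwards of $C_\sigma$, $C_\pi$ is generally not $\mathbb{Q}$-Cartier, and no such identity holds. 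Your second justification (``the pullback/pushforward compatibility \ldots\ forces the inequality'') simply restates the goal. One actually has to run a Negativity Lemma argument along $X_\sigma\to X_\pi$ relating $C_\sigma$ to the natural $\mathbb{R}$-pullback of its own push-forward $g_*C_\sigma$, which takes real work with the $\natural$- and $*$-pullbacks of possibly non-$\mathbb{Q}$-Cartier Weil divisors on the singular $X_\pi$.

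The limiting step is also not closed. The inequality you are after is coefficient-wise, but the definition of relative nefness you are working with only gives convergence of traces \emph{in the numerical class on each model}, which says nothing about the coefficients of $W_{\pi'}$ along $\rho$-exceptional prime divisors. You flag this honestly, but ``a suitable subsequence or convex combination'' is not a fix. The actual resolution in \cite{BdFF} is that the approximating family can be taken to be a \emph{decreasing} net of nef $\mathbb{R}$-Cartier $b$-divisors, so the traces converge coefficient-wise by monotonicity; one then uses that $D\mapsto -\rho^*(-D)$ is positively homogeneous and concave (since each $v^{\natural}_E$ is subadditive, so each $\val_E$ is convex), hence continuous on the relevant finite-dimensional space of $\mathbb{R}$-divisors on $X_\pi$, to pass the inequality to the limit. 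Without replacing ``numerical convergence'' by this stronger monotone, coefficient-wise convergence, the last step of your argument does not go through.
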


\begin{proof}
See \cite[Proposition 2.12]{BdFF}.
\end{proof}

Let $C_1,\ldots,C_n$ be $\mathbb{R}$-Cartier $b$-divisors, where
$n=\dim X$. Let $f$ be a common determination. It is clear that the
intersection number $C_{1,f}\cdot\ldots\cdot C_{n,f}$ is independent
of the choice of $f$ by the projection formula. We define
$C_1\cdot\ldots\cdot C_n$ to be the above intersection number. If
$W_1,\ldots,W_n$ are relatively nef $\mathbb{R}$-Weil $b$-divisors
over $X$, we define
$$W_1\cdot\ldots\cdot W_n=\inf (C_1\cdot\ldots\cdot C_n)\in
[-\infty,\infty),$$ where the infimum is taken over all relatively
nef $\mathbb{R}$-Cartier $b$-divisors $C_i$ over $X$ such that
$C_i\geqslant W_i$ for each $i$. It is obvious that the intersection
number is monotonic in the sense that, if $W_i\leqslant W_i'$ for
each $i$, then $W_1\cdot\ldots\cdot W_n\leqslant
W_1'\cdot\ldots\cdot W_n'$. For further properties of the
intersection number, we refer to Section 4.3 and Appendix A of
\cite{BdFF}.

Given a canonical divisor $K_X$ on $X$, there is a unique canonical
divisor $K_{X_\pi}$ for each model $\pi:X_\pi\rightarrow X$ with the
property that $\pi_*K_{X_{\pi}}=K_X$. Hence, a choice of $K_X$
determines a \textbf{canonical $b$-divisor} $K_\mathcal{X}$ over
$X$.

\subsection{Nef envelopes}

The \textbf{nef envelope $\Env_X(D)$ of an $\mathbb{R}$-divisor} $D$
on $X$ is an $\mathbb{R}$-Weil $b$-divisor over $X$ whose trace on a
model $\pi:X_\pi\rightarrow X$ is $-\pi^*(-D)$. For more details,
see \cite[Section 2]{BdFF}. If $D$ is $\mathbb{Q}$-Cartier, then
$\Env_X(D)$ is the $\mathbb{Q}$-Cartier $b$-divisor $\overline{D}$.

The \textbf{nef envelope $\Env_\mathcal{X}(W)$ of an
$\mathbb{R}$-Weil $b$-divisor} $W$ over $X$ is the largest
relatively nef $\mathbb{R}$-Weil $b$-divisor $Z$ over $X$ such that
$Z\leqslant W$. It is well-defined by \cite[Proposition 2.15]{BdFF}.
It is clear that if $W_1\leqslant W_2$, then
$\Env_\mathcal{X}(W_1)\leqslant\Env_\mathcal{X}(W_2)$.

The \textbf{log discrepancy $b$-divisor} is defined as
$$A_{\mathcal{X}/X}=K_{\mathcal{X}}+E_{\mathcal{X}/X}+\Env_X(-K_X),$$
where the trace of $E_{\mathcal{X}/X}$ in any model $\pi$ is equal
to the reduced exceptional divisor $E_\pi$ over $X$. It is clear
that the trace of $A_{\mathcal{X}/X}$ on a model
$\pi:X_\pi\rightarrow X$ is $A_{X_\pi/X}$. Similarly, for every
integer $m\geqslant 1$, we define the \textbf{$m$-th limiting log
discrepancy $b$-divisor} $A_{m,\mathcal{X}/X}$ to be a
$\mathbb{Q}$-Weil $b$-divisor whose trace on a model
$\pi:X_\pi\rightarrow X$ is $A_{m,X_\pi/X}$. It is easy to check
that $A_{m,\mathcal{X}/X}\leqslant A_{\mathcal{X}/X}$ and
$A_{\mathcal{X}/X}$ is the limsup of $A_{m,\mathcal{X}/X}$.

The \textbf{volume of the singularity} on $X$ is defined by
$$\Vol_{\text{BdFF}}(X,0)=-\Env_\mathcal{X}(A_{\mathcal{X}/X})^n.$$
It is shown in \cite{BdFF} that if $X$ has isolated singularity,
then $\Vol_{\text{BdFF}}(X,0)$ is a well-defined non-negative finite
real number.

\subsection{Log canonical modification}

Suppose $(X,\Delta)$ is a pair such that $X$ is a normal variety,
$\Delta$ is an effective $\mathbb{Q}$-divisor and $K_X+\Delta$ is
$\mathbb{Q}$-Cartier. A birational projective morphism
$f:Y\rightarrow X$ is called a log canonical modification of
$(X,\Delta)$ if
\begin{enumerate}
\item $(Y,\Delta_Y+E_f)$ is log canonical,
\item $K_Y+\Delta_Y+E_f$ is $f$-ample,
\end{enumerate}
where $\Delta_Y$ is the strict transform of $\Delta$ and $E_f$ is
the reduced exceptional divisor of $f$. It is shown in \cite{OX}
that the log canonical modification exists uniquely up to
isomorphism for any log pair $(X,\Delta)$. Clearly, if
$f':Y'\rightarrow X$ is a log resolution of the pair $(X,\Delta)$,
then
$$Y\cong\BigProj_X\bigoplus_{m\in\mathbb{Z}_{\geqslant
0}}f_*\mathcal{O}_{Y'}(m(K_{Y'}+\Delta_{Y'}+E_{f'})).$$

\begin{lemma}\label{exc}
Let $(X,\Delta)$ be a pair as above which is not log canonical. Let
$f:Y\rightarrow X$ be the log canonical model. Write
$f^*(K_X+\Delta)\sim_\mathbb{Q} K_Y+\Delta_Y+B$, and $B =\sum
b_iB_i$ as the sum of distinct prime divisors such that
$f_*(B)=\Delta$. We let $B^{>1}$ be the nonzero divisor
$\sum_{b_i>1} b_iB_i$, then $\Supp(B^{>1}) = \Ex(f)$. In particular,
$\Ex(f)\subset Y$ is of pure codimension 1.
\end{lemma}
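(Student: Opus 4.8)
The plan is to read the statement off from the relative ampleness of $K_Y+\Delta_Y+E_f$, using the Negativity Lemma together with an elementary argument involving curves contracted by $f$. Recall from the definition of the log canonical modification that $K_Y+\Delta_Y+E_f$ is $f$-ample, hence $\mathbb{Q}$-Cartier. Pushing forward $f^*(K_X+\Delta)\sim_{\mathbb{Q}}K_Y+\Delta_Y+B$ gives $f_*B=0$, so $B$ is $f$-exceptional; in particular every $B_i$ is $f$-exceptional, so $\Supp(B)\subseteq\Ex(f)$. Subtracting the two relations yields $A_{Y/X}^{\Delta}=K_Y+\Delta_Y+E_f-f^*(K_X+\Delta)=E_f-B$; since $K_Y+\Delta_Y+E_f$ is $f$-ample and $f^*(K_X+\Delta)$ is numerically $f$-trivial, the divisor $G:=B-E_f=-A_{Y/X}^{\Delta}$ is $\mathbb{Q}$-Cartier, $f$-exceptional, and $-G$ is $f$-ample.

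First I would apply the Negativity Lemma to $G$: it is $f$-exceptional with $f_*G=0$ and $-G$ is $f$-nef, so $G\geqslant 0$; that is, $B\geqslant E_f$, and the coefficient $b_i$ of $B$ along every exceptional prime $E_i$ satisfies $b_i\geqslant 1$. Hence $G=\sum_i(b_i-1)E_i$ is effective, and $\Supp(G)=\{E_i:b_i>1\}=\Supp(B^{>1})$. This already gives $\Supp(B^{>1})\subseteq\Ex(f)$, so the remaining point --- and the heart of the matter --- is the reverse inclusion $\Ex(f)\subseteq\Supp(G)$; note that this inclusion forces simultaneously that no $b_i$ equals $1$ and that $\Ex(f)$ has no component of codimension $\geqslant 2$.

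For the reverse inclusion I would use curves contracted by $f$. Given $y\in\Ex(f)$, Zariski's Main Theorem applied to the proper birational morphism $f$ onto the normal variety $X$ shows $\dim_y f^{-1}(f(y))\geqslant 1$ (otherwise $f$ would be quasi-finite, hence finite, hence an isomorphism near $y$), so there is a proper irreducible curve $C\ni y$ with $f(C)$ a point. As $-G$ is $f$-ample, $(-G)\cdot C>0$, hence $G\cdot C<0$; since $G$ is effective and $\mathbb{Q}$-Cartier, this is impossible unless $C\subseteq\Supp(G)$, and in particular $y\in\Supp(G)$. Thus $\Ex(f)=\Supp(G)=\Supp(B^{>1})$, which, being the support of a $\mathbb{Q}$-divisor, is of pure codimension $1$; this is the last assertion. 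Finally $B^{>1}\neq 0$: otherwise $\Ex(f)$ would be empty, $f$ an isomorphism, and $(X,\Delta)=(Y,\Delta_Y+E_f)$ log canonical, contrary to hypothesis.

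I expect the Negativity Lemma step to be routine bookkeeping. The step that requires genuine care is the curve argument above: it is exactly there that the ampleness --- not merely the nefness --- of $K_Y+\Delta_Y+E_f$ is used, both to spread $\Supp(-G)$ across the whole exceptional locus and to exclude a ``hidden'' exceptional locus of codimension $\geqslant 2$.
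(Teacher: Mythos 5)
Your proof is correct and, in substance, is the same argument as in \cite[Lemma 2.4]{OX}, which is all the paper cites for this lemma: the Negativity Lemma applied to $G=B-E_f=-A_{Y/X}^{\Delta}$ (using $f$-ampleness of $K_Y+\Delta_Y+E_f$) gives $b_i\geqslant 1$, and the curve-intersection argument with the $f$-ample $-G$ forces $\Ex(f)\subseteq\Supp(G)=\Supp(B^{>1})$. You also correctly resolved the small inconsistency in the statement as written (with $\Delta_Y$ present one has $f_*B=0$, not $f_*B=\Delta$), which does not affect the conclusion since $\lfloor\Delta\rfloor=0$ keeps $B^{>1}$ exceptional either way.
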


\begin{proof}
See \cite[Lemma 2.4]{OX}.
\end{proof}

\section{$\mathbb{Q}$-Gorenstein Case}

Assume that $X$ is a $\mathbb{Q}$-Gorenstein normal variety with
isolated singularities. We pick $\Delta=0$ and suppose that
$f:Y\rightarrow X$ is the log canonical modification of $X$. Let
$F=f^*K_X-K_Y-E_f$. We define $\Vol(X)=-(K_Y+E_f-f^*K_X)^n$. By the
Negativity Lemma (see \cite[Lemma 3.39]{KM}), $F\geqslant 0$. Since
$K_Y+E_f$ is $f$-ample and $F$ is $f$-exceptional, we have that
$$\Vol(X)=-(K_Y+E_f-f^*K_X)^n=F\cdot (K_Y+E_f)^{n-1}\geqslant
0.$$

\begin{remark}
This definition can be extended to the case that $X$ has isolated
non-log-canonical locus.
\end{remark}

\begin{theorem}\label{QGor}
If $X$ is a $\mathbb{Q}$-Gorenstein normal variety which has
isolated singularities, then $\Vol_{\text{BdFF}}(X,0)=\Vol(X)$.
\end{theorem}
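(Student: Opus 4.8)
The plan is to compare the $b$-divisor $\Env_{\mathcal X}(A_{\mathcal X/X})$ with the Cartier $b$-divisor $\overline{K_Y+E_f}$ determined by the log canonical modification $f:Y\to X$, and show that both yield the same top self-intersection number. First I would note that, because $K_X$ is $\mathbb Q$-Cartier, $\Env_X(-K_X)=\overline{-K_X}$, so $A_{\mathcal X/X}=K_{\mathcal X}+E_{\mathcal X/X}-\overline{K_X}$, whose trace on any model $\pi:X_\pi\to X$ is exactly $A_{X_\pi/X}=K_{X_\pi}+E_\pi-\pi^*K_X$. On the model $Y$ itself this trace is $K_Y+E_f-f^*K_X=-F$, and since $K_Y+E_f$ is $f$-ample (hence $-(K_Y+E_f)$ is relatively nef? — no, the relatively nef object is $F-(K_Y+E_f)$ only up to sign; I must be careful here), the idea is that the Cartier $b$-divisor $C:=\overline{-(K_Y+E_f)}=-\,\overline{K_Y+E_f}$ is a natural candidate that is \emph{not} relatively nef, but its negative behaves well against the envelope.

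The key step is the following two-sided comparison. On one hand, $C\leqslant A_{\mathcal X/X}$: on a model $\pi'$ dominating $Y$ via $\rho:X_{\pi'}\to Y$, the trace of $C$ is $-\rho^*(K_Y+E_f)$, while the trace of $A_{\mathcal X/X}$ is $K_{X_{\pi'}}+E_{\pi'}-\pi'^*K_X = \rho^*(K_Y+E_f-f^*K_X) + (\text{something effective }\rho\text{-exceptional}) = \rho^*(-F) + (\cdots)$; comparing, one uses that $\rho^*(K_Y+E_f)\leqslant K_{X_{\pi'}}+E_{\pi'}$ plus $\rho^*f^*K_X = \pi'^*K_X$, which reduces to checking that $K_Y+E_f$ being log canonical on $Y$ forces the discrepancy inequality on all higher models — here is where I would invoke that $(Y,E_f)$ is log canonical together with Lemma \ref{composition}. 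On the other hand, since $C$ is Cartier and hence $\Env_{\mathcal X}(A_{\mathcal X/X})\geqslant$ any relatively nef $b$-divisor $\leqslant A_{\mathcal X/X}$, one wants $\Env_{\mathcal X}(A_{\mathcal X/X})=C$ exactly, which requires showing $C$ itself is, in the appropriate weak sense, the largest relatively nef object below $A_{\mathcal X/X}$. This is where the $f$-ampleness of $K_Y+E_f$ enters decisively: ampleness relative to $X$ is precisely what makes $\overline{K_Y+E_f}$ a ``minimal model'' type object, so that for any model $\pi'\geqslant Y$ the Negativity Lemma (Lemma \ref{negativity}) applied to the relatively nef $\Env_{\mathcal X}(A_{\mathcal X/X})$ pins its trace on $Y$ from above by $-F$, and $f$-ampleness pins it from below.

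Assembling: once $\Env_{\mathcal X}(A_{\mathcal X/X})=\overline{-(K_Y+E_f)}$ as $\mathbb Q$-Cartier $b$-divisors determined on $Y$, the intersection number is computed on $Y$:
$$\Vol_{\text{BdFF}}(X,0)=-\Env_{\mathcal X}(A_{\mathcal X/X})^n=-(-(K_Y+E_f))^n=-(K_Y+E_f-f^*K_X)^n=\Vol(X),$$
where the middle equality uses that $f^*K_X$ contributes nothing to the top self-intersection of an $f$-exceptional-modification class in the way already exploited in the displayed formula $\Vol(X)=F\cdot(K_Y+E_f)^{n-1}$. The main obstacle I anticipate is the second half of the comparison: proving that no relatively nef $\mathbb R$-Weil $b$-divisor sits strictly between $\overline{-(K_Y+E_f)}$ and $A_{\mathcal X/X}$, i.e. that the envelope does not ``see'' more than the log canonical modification. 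This should follow from the $f$-ampleness via a negativity/rigidity argument on models dominating $Y$, but making the limiting passage from $\mathbb Q$-Cartier to $\mathbb R$-Weil $b$-divisors rigorous — and handling the fact that $A_{\mathcal X/X}$ is only a limsup of the $A_{m,\mathcal X/X}$ — is the delicate point.
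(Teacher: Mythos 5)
Your overall strategy is the right one and matches the paper's: compute $\Env_{\mathcal X}(A_{\mathcal X/X})$ as a $\mathbb Q$-Cartier $b$-divisor determined on the log canonical modification $f\colon Y\to X$, bounding it from below using log canonicity of $(Y,E_f)$ and from above by applying Lemma~\ref{negativity} to the relatively nef envelope, with $f$-ampleness of $K_Y+E_f$ tying the two together. But there is a concrete error in the object you choose to compare with.

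You set $C=\overline{-(K_Y+E_f)}$. This is the wrong $b$-divisor: it is not $f$-nef (it is $f$-anti-ample), and it does not satisfy $C\leqslant A_{\mathcal X/X}$ in general. Already on $Y$ itself the required inequality would read $-(K_Y+E_f)\leqslant K_Y+E_f-f^*K_X$, which has no reason to hold. The correct candidate is $\overline{A_{Y/X}}=\overline{K_Y+E_f-f^*K_X}$. Since $f^*K_X$ is $f$-numerically trivial and $K_Y+E_f$ is $f$-ample, $A_{Y/X}$ is $f$-ample, hence its closure is a relatively nef $\mathbb Q$-Cartier $b$-divisor; and log canonicity of $(Y,E_f)$ gives $A_{Y'/X}-g^*A_{Y/X}\geqslant 0$ on higher models, so $\overline{A_{Y/X}}\leqslant A_{\mathcal X/X}$. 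In fact your own intermediate computation (the line identifying the trace of $A_{\mathcal X/X}$ as $\rho^*(K_Y+E_f-f^*K_X)$ plus an effective $\rho$-exceptional term) is evidence for $\overline{A_{Y/X}}\leqslant A_{\mathcal X/X}$, not for $\overline{-(K_Y+E_f)}\leqslant A_{\mathcal X/X}$. The confusion propagates into the last display: the equality $-(-(K_Y+E_f))^n=-(K_Y+E_f-f^*K_X)^n$ is false, since $-(K_Y+E_f)=-f^*K_X+F$ while $K_Y+E_f-f^*K_X=-F$, and these have different top self-intersections.

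Two smaller points. First, in the $\mathbb Q$-Gorenstein setting the limsup issue you flag does not arise: $K_X$ is $\mathbb Q$-Cartier, so $A_{\mathcal X/X}$ has trace $K_{X_\pi}+E_\pi-\pi^*K_X$ on each model directly, with no passage through the $A_{m,\mathcal X/X}$ needed. Second, for the upper bound you need exactly the variant Negativity Lemma~\ref{negativity} applied with $\pi=f$: it gives $D\leqslant-g^*\bigl(-\Env_{\mathcal X}(A_{\mathcal X/X})_f\bigr)\leqslant-g^*(-A_{Y/X})=g^*A_{Y/X}$, the last equality holding because $A_{Y/X}$ is $\mathbb Q$-Cartier. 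With the replacement $C\rightsquigarrow\overline{A_{Y/X}}$ your argument closes correctly and gives $\Env_{\mathcal X}(A_{\mathcal X/X})=\overline{A_{Y/X}}$, hence $\Vol_{\mathrm{BdFF}}(X,0)=-(A_{Y/X})^n=\Vol(X)$.
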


\begin{proof}
We show that $\Env_\mathcal{X}(A_{\mathcal{X}/X})$ is a
$\mathbb{Q}$-Cartier $b$-divisor and equals to $\overline{A_{Y/X}}$
where $f:Y\rightarrow X$ is the log canonical modification of
$(X,0)$. Then the theorem follows immediately.

We only need to show that on a high enough model $f':Y'\rightarrow
X$ which factors through $f:Y\rightarrow X$ via $g:Y'\rightarrow Y$,
we have that $D=\Env_\mathcal{X}(A_{\mathcal{X}/X})_{f'}$ equals to
$g^*A_{Y/X}$.

First, we show that $g^*A_{Y/X}\leqslant D$. Since $(Y,E_f)$ is log
canonical, we have that
$$\begin{array}{cl}
  & A_{Y'/X}-g^*A_{Y/X} \\
=\!\!\! & K_{Y'}+E_{f'}-f'^*K_X-g^*(K_Y+E_f-f^*K_X) \\
=\!\!\! & K_{Y'}+(E_f)_{Y'}+E_g-g^*(K_Y+E_f) \\
\geqslant\!\!\! & 0,
\end{array}$$ where $(E_f)_{Y'}$ is the strict transform of $E_f$ on $Y'$. As
$A_{Y/X}$ is $f$-ample, we have that $g^*A_{Y/X}$ is $f'$-nef. We
can conclude that $\overline{A_{Y/X}}$ is a relatively nef
$\mathbb{Q}$-Cartier $b$-divisor such that
$\overline{A_{Y/X}}\leqslant A_{\mathcal{X}/X}$. By the definition
of nef envelope, we have that
$\overline{A_{Y/X}}\leqslant\Env_\mathcal{X}(A_{\mathcal{X}/X})$. In
particular, $g^*A_{Y/X}\leqslant D$.

On the other hand, by the definition of nef envelope, we have that
$\Env_\mathcal{X}(A_{\mathcal{X}/X})$ is relatively nef over $X$. We
may apply Lemma \ref{negativity}. Thus, $D\leqslant
-g^*(-\Env_\mathcal{X}(A_{\mathcal{X}/X})_f)$. By definition,
$\Env_\mathcal{X}(A_{\mathcal{X}/X})_f\leqslant A_{Y/X}$. Hence,
$D\leqslant -g^*(-A_{Y/X})=g^*A_{Y/X}$, since $A_{Y/X}$ is
$\mathbb{Q}$-Cartier. Therefore, $D=g^*A_{Y/X}$.
\end{proof}

\begin{theorem}\label{DCC}
There exists a positive lower bound, only depending on the
dimension, for the volume of isolated Gorenstein singularities with
positive volume.
\end{theorem}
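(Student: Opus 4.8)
The plan is to reduce Problem A to the DCC (descending chain condition) for volumes of log canonical pairs established in \cite{HMX12}. The starting point is Theorem~\ref{QGor}: for a Gorenstein (hence $\mathbb{Q}$-Gorenstein) normal variety $X$ with isolated singularities we have $\Vol_{\text{BdFF}}(X,0) = \Vol(X) = F\cdot(K_Y+E_f)^{n-1}$, where $f:Y\to X$ is the log canonical modification of $(X,0)$, $F = f^*K_X - K_Y - E_f \geqslant 0$ is effective and $f$-exceptional, and $(Y,E_f)$ is log canonical with $K_Y+E_f$ being $f$-ample. The key observation is that since $X$ is Gorenstein, $K_X$ is Cartier, so $f^*K_X$ is a genuine Cartier divisor and the coefficients of $F = f^*K_X - K_Y - E_f$ are controlled: writing $A_{Y/X} = K_Y + E_f - f^*K_X = -F$, the discrepancies are of the form (integer) $-$ $1$, so the coefficients of $F$ lie in the discrete set $\mathbb{Z}_{\geqslant 0}$. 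This discreteness is what makes the DCC machinery apply.

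Next I would set up the situation so that \cite{HMX12} is directly applicable. Since $X$ has isolated singularities, the interesting geometry is local around the singular point; one may pass to a projective compactification or, better, work with $f:Y\to X$ over an affine neighborhood of the singular point and observe that $\Vol(X)$ is an intersection number on the $f$-exceptional fibre, which is projective. The pair $(Y, E_f)$ is log canonical and $K_Y + E_f$ is $f$-ample; I would like to interpret $\Vol(X) = \mathrm{vol}(K_Y+E_f)$ as a (relative, or suitably globalized) volume of the log canonical pair $(Y,E_f)$ — more precisely, $F\cdot(K_Y+E_f)^{n-1}$ is $(-A_{Y/X})\cdot(K_Y+E_f)^{n-1} = (f^*K_X - K_Y - E_f)\cdot (K_Y + E_f)^{n-1}$, and since $f^*K_X$ restricted to the exceptional fibre is numerically trivial there, this equals $-(K_Y+E_f)^n$ computed on that fibre, i.e. the volume of the $f$-ample divisor $-(K_Y+E_f)$... being careful with signs: $K_Y+E_f$ is $f$-ample so $-(K_Y+E_f)$ is $f$-anti-ample, and the positive quantity is $(-1)^{n-1}$ times a self-intersection of an $f$-ample class against $F\geqslant 0$. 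The point I would nail down is that $\Vol(X)$ equals a volume-type invariant of a log canonical pair whose coefficient set (the coefficients of $E_f$, which are all $1$, together with the integral structure coming from $K_X$ Cartier) lies in a fixed DCC set $\{1\}$, and whose dimension is fixed equal to $n$.

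The main obstacle, and the step requiring the most care, is precisely this translation: the DCC for volumes in \cite{HMX12} is stated for projective log canonical (or klt) pairs $(W,\Delta)$ with $K_W+\Delta$ big and $\mathrm{coeff}(\Delta)$ in a DCC set, asserting that $\mathrm{vol}(K_W+\Delta)$ ranges in a DCC set. Here the natural object $(Y,E_f)$ is only relatively projective over $X$, and $K_Y+E_f$ is only $f$-ample, not ample, so one must either globalize (compactify $X$ and $Y$ compatibly, spreading out $E_f$ to a boundary whose coefficients stay in a DCC set, and check that the global volume sees the local contribution) or invoke a relative version of the DCC. I expect the cleanest route is compactification: choose a projective compactification $\bar X \supset X$ with $\bar X$ still Gorenstein near the singular point (or arrange the non-log-canonical locus to remain isolated), take $\bar f : \bar Y \to \bar X$ extending $f$, and argue that $\Vol(X)$ is a difference of global volumes, hence lands in a DCC set; since a DCC set has a least positive element, this yields the desired uniform positive lower bound $\varepsilon(n) > 0$. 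Finally, I would note that because the volume vanishes exactly when $f$ is an isomorphism in codimension one (equivalently $X$ is log canonical), the "positive volume" hypothesis guarantees we are in the regime where the DCC set's minimum positive element is a genuine lower bound, completing the proof.
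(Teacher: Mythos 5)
Your setup is right (reduction to \cite{HMX12}, integrality of the coefficients of $F$ from the Gorenstein hypothesis), but you never actually close the gap you yourself identify, and the route you sketch for closing it would not work as stated. The paper's key move is to use the integrality one step further: when $\Vol(X)>0$, Lemma~\ref{exc} forces every coefficient $b_i$ of $B=f^*K_X-K_Y$ along an $f$-exceptional divisor to be $>1$, and since $X$ is Gorenstein these $b_i$ are integers, so $b_i\geqslant 2$ and hence $F=B-E_f\geqslant E_f$. This gives $\Vol(X)=F\cdot(K_Y+E_f)^{n-1}\geqslant E_f\cdot(K_Y+E_f)^{n-1}$, and then adjunction converts the right-hand side into $\bigl(K_{E_f}+\text{Diff}_{E_f}(0)\bigr)^{n-1}$, the volume of an honest \emph{projective} log canonical pair of dimension $n-1$ with coefficients in the standard DCC set $\{0,1\}\cup\{1-\frac{1}{m}\}$, where \cite[Theorem 1.3]{HMX12} applies directly. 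This is exactly the step your proposal is missing: you keep trying to view the intersection number as a volume of the full $n$-dimensional pair $(Y,E_f)$, which is only relatively projective, and never pass to the exceptional divisor where the geometry becomes projective of one dimension lower.

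The compactification route you propose as a substitute is not merely unworked-out; the final step, where you say $\Vol(X)$ is ``a difference of global volumes, hence lands in a DCC set,'' does not follow: the set of differences of elements of a DCC set is in general not DCC (e.g.\ $\{1-\frac1n\}$ is DCC but the differences $\frac1m-\frac1n$ accumulate to $0$ from above). You would also need to control how compactifying changes the boundary coefficients and the log canonical modification, none of which is addressed. So the decisive ideas of the paper's proof --- the inequality $F\geqslant E_f$ forced by Gorenstein integrality plus $\Vol(X)>0$, and the adjunction to $(E_f,\text{Diff}_{E_f}(0))$ that hands you a projective pair in dimension $n-1$ with standard coefficients --- are absent, and the proposed replacement breaks at the last step.
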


\begin{proof}
Suppose $X$ is a Gorenstein normal variety with isolated
singularities and $f:Y\rightarrow X$ is its log canonical
modification. Let $F=f^*K_X-K_Y-E_f=\sum a_i\cdot E_i$, where $E_i$
are $f$-exceptional divisors. Since $X$ is Gorenstein, we have that
$a_i$ are positive integers by Lemma \ref{exc}. If $\Vol(X)>0$, then
$F\neq0$, hence $F\geqslant E_f$. We have that
$$\Vol(X)\geqslant E_f\cdot
(K_Y+E_f)^{n-1}=((K_Y+E_f)|_{E_f})^{n-1}=(K_{E_f}+\text{Diff}_{E_f}(0))^{n-1}.$$
Since $(Y,E_f)$ is log canonical, by \cite[16.6]{Kol}, the
coefficients of $\text{Diff}_{E_f}(0)$ lies in
$\{0,1\}\cup\{1-\frac{1}{m}|m\geqslant 2\}$, which is a DCC set. By
\cite[Theorem 1.3]{HMX12}, we have that
$(K_{E_f}+\text{Diff}_{E_f}(0))^{n-1}$ lies in a DCC set. The
theorem follows.
\end{proof}

\section{Non-$\mathbb{Q}$-Gorenstein Case}

Let $X$ be a normal variety which has only isolated singularities.
For any integer $m\geqslant 2$, fix a log resolution $f:Y\rightarrow
X$ of $(X,\mathcal{O}_X(-mK_X))$. By Theorem \ref{compatible}, we
can find a boundary $\Delta$ such that $K_{Y/X}^{\Delta}=K_{m,Y/X}$.
Let $f_{lc}:Y_{lc}\rightarrow X$ be the log canonical modification
of the pair $(X,\Delta)$. Then
$$Y_{lc}\cong\BigProj_X\bigoplus_{m\in\mathbb{Z}_{\geqslant
0}}f_*\mathcal{O}_Y(m(K_Y+\Delta_Y+E_f)).$$ Assuming that $\Delta'$
is another $m$-compatible boundary for $X$ with respect to $f$ and
$f'_{lc}:Y'_{lc}\rightarrow X$ is the corresponding log canonical
modification, we have that $K_{Y/X}^{\Delta}=K_{Y/X}^{\Delta'}$.
Hence, $\Delta_Y-\Delta'_Y=f^*(\Delta-\Delta')$. Now,
$$\begin{array}{cl}
  & f_*\mathcal{O}_Y(m(K_Y+\Delta'_Y)) \\
= & f_*\mathcal{O}_Y(m(K_Y+\Delta_Y-f^*(\Delta-\Delta'))) \\
= & f_*\mathcal{O}_Y(m(K_Y+\Delta_Y))\otimes
\mathcal{O}_X(m(\Delta'-\Delta))
\end{array}$$ for sufficiently divisible $m$, as $\Delta-\Delta'$ is $\mathbb{Q}$-Cartier.
Thus, there is a natural $X$-isomorphism $\sigma:Y_{lc}\rightarrow
Y'_{lc}$ such that $f_{lc}=f'_{lc}\circ \sigma$. Fix a common
resolution of $Y$ and $Y_{lc}$, $\tilde{f}:\tilde{Y}\rightarrow X$,
as in the following diagram:
$$\xymatrix{
& \tilde{Y} \ar[ld]_{s} \ar[rd]^{t} & \\
Y \ar@{-->}[rr]\ar[rd]_{f} & & Y_{lc} \ar[ld]^{f_{lc}} \\
& X & }$$ Noticing that $\tilde{Y}$ is also a common resolution of
$Y$ and $Y'_{lc}$, we have that the morphism $s:\tilde{Y}\rightarrow
Y$ is independent of the choice of $\Delta$.

\begin{theorem}\label{NQGor}
The $\mathbb{R}$-Weil $b$-divisor
$\Env_\mathcal{X}(A_{m,\mathcal{X}/X})$ is a $\mathbb{Q}$-Cartier
$b$-divisor. If $\Delta$ is $m$-compatible for $X$ with respect to
$\tilde{f}:\tilde{Y}\rightarrow X$, then
$$\Env_\mathcal{X}(A_{m,\mathcal{X}/X})=\overline{A_{Y_{lc}/X}^\Delta}.$$
\end{theorem}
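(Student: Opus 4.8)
The plan is to run the argument of Theorem \ref{QGor} with the pair $(X,\Delta)$ and its log canonical modification $f_{lc}:Y_{lc}\rightarrow X$ in place of $(X,0)$ and its log canonical modification. Set $D=A_{Y_{lc}/X}^\Delta=K_{Y_{lc}}+\Delta_{Y_{lc}}+E_{f_{lc}}-f_{lc}^*(K_X+\Delta)$. Since $K_X+\Delta$ is $\mathbb{Q}$-Cartier and $K_{Y_{lc}}+\Delta_{Y_{lc}}+E_{f_{lc}}$ is $f_{lc}$-ample, the divisor $D$ is $\mathbb{Q}$-Cartier and $f_{lc}$-ample, so $\overline{D}$ is a relatively nef $\mathbb{Q}$-Cartier $b$-divisor. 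It then suffices to prove the two inequalities $\overline{D}\leqslant\Env_\mathcal{X}(A_{m,\mathcal{X}/X})$ and $\Env_\mathcal{X}(A_{m,\mathcal{X}/X})\leqslant\overline{D}$; the $\mathbb{Q}$-Cartier assertion then follows for free.

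The key point is the identity $A_{m,Y_{lc}/X}=D$ of divisors on the model $Y_{lc}$ itself. To obtain it, rewrite the $m$-compatibility hypothesis $K_{\tilde Y/X}^\Delta=K_{m,\tilde Y/X}$ on $\tilde Y$ as $\tilde f^*(K_X+\Delta)=\Delta_{\tilde Y}+\tfrac1m\tilde f^\natural(mK_X)$, and push this equality forward along $t:\tilde Y\rightarrow Y_{lc}$. Here $t_*\Delta_{\tilde Y}=\Delta_{Y_{lc}}$, and $t_*\tilde f^*(K_X+\Delta)=f_{lc}^*(K_X+\Delta)$ because $K_X+\Delta$ is $\mathbb{Q}$-Cartier; the term $t_*\tilde f^\natural(mK_X)$ equals $f_{lc}^\natural(mK_X)$ because by Lemma \ref{composition} the difference $\tilde f^\natural(mK_X)-t^\natural(f_{lc}^\natural(mK_X))$ is effective and $t$-exceptional, hence dies under $t_*$ (note that $Y_{lc}$ need not be a log resolution, so this is genuinely an inequality of $\natural$-pullbacks, not an equality). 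Adding $K_{Y_{lc}}+E_{f_{lc}}$ to the resulting identity and rearranging gives $D=A_{m,Y_{lc}/X}$.

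For $\overline{D}\leqslant\Env_\mathcal{X}(A_{m,\mathcal{X}/X})$ it is enough, by the maximality property of the nef envelope, to check $\overline{D}\leqslant A_{m,\mathcal{X}/X}$ as $b$-divisors, and this may be tested on the cofinal family of models $\rho:W\rightarrow\tilde Y$ (which dominate $Y_{lc}$ via $t\rho$). On such $W$ the trace of $\overline{D}$ is $(t\rho)^*D$ and that of $A_{m,\mathcal{X}/X}$ is $A_{m,W/X}$. Since $(Y_{lc},\Delta_{Y_{lc}}+E_{f_{lc}})$ is log canonical, the discrepancy computation used in the proof of Theorem \ref{QGor} gives $A_{W/X}^\Delta-(t\rho)^*D\geqslant0$; and pulling the rewritten $m$-compatibility identity back to $W$ (using that $\tilde f^\natural(mK_X)$ is Cartier on the smooth $\tilde Y$, so that its $\natural$-pullback and pullback to $W$ coincide and cancel) shows $A_{m,W/X}-A_{W/X}^\Delta=\rho^*\Delta_{\tilde Y}-\Delta_W\geqslant0$, the effectivity being clear since $\rho^*\Delta_{\tilde Y}$ dominates the strict transform $\Delta_W$. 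Hence $(t\rho)^*D\leqslant A_{m,W/X}$, as required.

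For the reverse inequality, $\Env_\mathcal{X}(A_{m,\mathcal{X}/X})$ is relatively nef by definition, so Lemma \ref{negativity} applied to $t\rho:W\rightarrow Y_{lc}$ gives $\Env_\mathcal{X}(A_{m,\mathcal{X}/X})_W\leqslant-(t\rho)^*\bigl(-\Env_\mathcal{X}(A_{m,\mathcal{X}/X})_{Y_{lc}}\bigr)$; since $\Env_\mathcal{X}(A_{m,\mathcal{X}/X})_{Y_{lc}}\leqslant A_{m,Y_{lc}/X}=D$ with $D$ $\mathbb{Q}$-Cartier, the right-hand side is $\leqslant(t\rho)^*D$, the trace of $\overline{D}$ on $W$. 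Testing on the cofinal family $\{W\}$ and pushing forward to arbitrary models gives $\Env_\mathcal{X}(A_{m,\mathcal{X}/X})\leqslant\overline{D}$, and combining with the previous step yields $\Env_\mathcal{X}(A_{m,\mathcal{X}/X})=\overline{D}=\overline{A_{Y_{lc}/X}^\Delta}$, which is in particular $\mathbb{Q}$-Cartier. The main obstacle is the bookkeeping forced by $\mathcal{O}_X(-mK_X)\cdot\mathcal{O}_{Y_{lc}}$ not being invertible: one must carefully distinguish the $\natural$-pullback from the pullback on $Y_{lc}$, while exploiting that the two differ only by effective exceptional divisors that vanish under $t_*$ and that they agree outright on the log resolution $\tilde Y$.
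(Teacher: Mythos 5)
Your proof is correct and follows essentially the same route as the paper: reduce to traces on models $W$ dominating $\tilde Y$, establish $(t\rho)^*A_{Y_{lc}/X}^\Delta\leqslant A_{m,W/X}$ by splitting the difference into a log discrepancy over the lc pair $(Y_{lc},\Delta_{Y_{lc}}+E_{f_{lc}})$ plus $\rho^*\Delta_{\tilde Y}-\Delta_W\geqslant 0$, and then get the reverse inequality from Lemma \ref{negativity}. The one place where you are more explicit than the paper is in isolating and justifying the identity $A_{m,Y_{lc}/X}=A_{Y_{lc}/X}^\Delta$ (via $t_*$ and Lemma \ref{composition}); the paper uses this silently in the step ``$-(t\circ\pi)^*(-A_{m,Y_{lc}/X})=(t\circ\pi)^*A_{Y_{lc}/X}^\Delta$,'' so your write-up fills a small gap in the exposition rather than taking a different approach.
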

\begin{proof}
We will mimic the proof of Theorem \ref{QGor}. We only need to show
that on a high enough model $\rho:Z\rightarrow X$ which factors
through $\tilde{f}:\tilde{Y}\rightarrow X$ by $\pi:Z\rightarrow
\tilde{Y}$, we have that
$D=\Env_\mathcal{X}(A_{m,\mathcal{X}/X})_{\rho}$ equals to $(t\circ
\pi)^*A_{Y_{lc}/X}^\Delta$.

First, we show that $(t\circ \pi)^*A_{Y_{lc}/X}^\Delta\leqslant D$.
Since $\Delta$ is $m$-compatible for $X$ with respect to
$\tilde{f}$, we have that
$$K_{\tilde{Y}}+\Delta_{\tilde{Y}}-\tilde{f}^*(K_X+\Delta)=K_{\tilde{Y}}-\frac{1}{m}\tilde{f}^\natural(mK_X).$$
Hence,
$$-\frac{1}{m}\rho^\natural(mK_X)=\pi^*\left(-\frac{1}{m}\tilde{f}^\natural(mK_X)\right)=\pi^*\Delta_{\tilde{Y}}-\rho^*(K_X+\Delta),$$
by Lemma \ref{composition} and the fact that $\tilde{Y}$ is smooth.
Now, the difference
$$\begin{array}{cl}
\vspace{0.2cm}        & A_{m,Z/X}-(t\circ\pi)^*A_{Y_{lc}/X}^\Delta \\
\vspace{0.2cm}=\!\!\! & K_Z+E_\rho-\displaystyle\frac{1}{m}\rho^\natural(mK_X)-(t\circ\pi)^*(K_{Y_{lc}}+\Delta_{Y_{lc}}+E_{f_{lc}})+\rho^*(K_X+\Delta) \\
\vspace{0.2cm}=\!\!\! &
K_Z+E_\rho+\pi^*\Delta_{\tilde{Y}}-(t\circ\pi)^*(K_{Y_{lc}}+\Delta_{Y_{lc}}+E_{f_{lc}})
\\
\vspace{0.2cm}=\!\!\! & (K_Z+\Delta_Z+(E_{f_{lc}})_Z+E_{t\circ
\pi}-(t\circ\pi)^*(K_{Y_{lc}}+\Delta_{Y_{lc}}+E_{f_{lc}}))+(\pi^*\Delta_{\tilde{Y}}-\Delta_Z)
\end{array}$$
Since $(Y_{lc},\Delta_{Y_{lc}}+E_{f_{lc}})$ is log canonical, we
have that the first term is effective and $(t\circ\pi)$-exceptional.
As $\Delta_Z$ is the strict transform of the effective divisor
$\Delta_{\tilde{Y}}$, the second term is effective and
$\pi$-exceptional, hence, $(t\circ\pi)$-exceptional. We conclude
that $A_{m,Z/X}-(t\circ\pi)^*A_{Y_{lc}/X}^\Delta$ is effective and
$(t\circ\pi)$-exceptional. Since $A_{Y_{lc}/X}^\Delta$ is $f$-ample,
we have that $(t\circ\pi)^*A_{Y_{lc}/X}^\Delta$ is $\rho$-nef. We
can conclude that $\overline{A_{Y_{lc}/X}^\Delta}$ is a relatively
nef $\mathbb{Q}$-Cartier $b$-divisor such that
$\overline{A_{Y_{lc}/X}^\Delta}\leqslant A_{m,\mathcal{X}/X}$. By
the definition of nef envelope, we have that
$\overline{A_{Y_{lc}/X}^\Delta}\leqslant\Env_\mathcal{X}(A_{m,\mathcal{X}/X})$.
In particular, $(t\circ \pi)^*A_{Y_{lc}/X}^\Delta\leqslant D$.

On the other hand, by the definition of nef envelope, we have that
$\Env_\mathcal{X}(A_{m,\mathcal{X}/X})$ is relatively nef over $X$.
We may apply Lemma \ref{negativity}. Thus, $D\leqslant -(t\circ
\pi)^*(-\Env_\mathcal{X}(A_{m,\mathcal{X}/X})_{f_{lc}})$. By
definition, $\Env_\mathcal{X}(A_{m,\mathcal{X}/X})_{f_{lc}}\leqslant
A_{m,Y_{lc}/X}$. Hence, $D\leqslant
-(t\circ\pi)^*(-A_{m,Y_{lc}/X})=(t\circ \pi)^*A_{Y_{lc}/X}^\Delta$,
since $A_{Y_{lc}/X}^\Delta$ is $\mathbb{Q}$-Cartier. Therefore,
$D=(t\circ\pi)^*A_{Y_{lc}/X}^\Delta$.
\end{proof}

We can define the volume of singularities of $X$ as follow:

\begin{definition}
The $m$-th limiting volume of singularity of $X$ is
$$\Vol_m(X)=-\Env_\mathcal{X}(A_{m,\mathcal{X}/X})^n.$$
\end{definition}

\begin{corollary}\label{vol_m}
In the setting of Theorem \ref{NQGor}, if $\Delta$ is $m$-compatible
for $X$ with respect to $\tilde{f}$, then
$$\Vol_m(X)=-(A_{Y_{lc}/X}^{\Delta})^n=-A_{Y_{lc}/X}^{\Delta}\cdot
(K_{Y_{lc}}+\Delta_{Y_{lc}}+E_{f_{lc}})^{n-1}\geqslant 0.$$
\end{corollary}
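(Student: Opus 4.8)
The plan is to carry out, on the model $Y_{lc}$, the very computation that defines $\Vol(X)$ in the $\mathbb{Q}$-Gorenstein case. Write $A=A_{Y_{lc}/X}^{\Delta}$, $H=K_{Y_{lc}}+\Delta_{Y_{lc}}+E_{f_{lc}}$ and $P=f_{lc}^{*}(K_X+\Delta)$, so $A=H-P$. Since $H$ is $f_{lc}$-ample it is $\mathbb{Q}$-Cartier, $P$ is $\mathbb{Q}$-Cartier, hence $A$ is $\mathbb{Q}$-Cartier on $Y_{lc}$; in fact $A$ is $f_{lc}$-ample, being the difference of the $f_{lc}$-ample $H$ and the $f_{lc}$-numerically trivial $P$. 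The corollary then amounts to the identities $\Vol_m(X)=-A^n$ and $A^n=A\cdot H^{n-1}$, together with $-A\cdot H^{n-1}\geqslant 0$.

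For the first identity, Theorem \ref{NQGor} identifies $\Env_\mathcal{X}(A_{m,\mathcal{X}/X})$ with the relatively nef $\mathbb{Q}$-Cartier $b$-divisor $\overline{A}$, which is determined on the model $Y_{lc}$. By the properties of the $b$-divisor intersection product recalled in Section 4.3 and Appendix A of \cite{BdFF}, the self-intersection of a relatively nef $\mathbb{Q}$-Cartier $b$-divisor equals the ordinary self-intersection of its trace on a determination; hence $\Env_\mathcal{X}(A_{m,\mathcal{X}/X})^n=A^n$ and $\Vol_m(X)=-A^n$ (a finite number, since $A$ is $f_{lc}$-exceptional, hence supported over a complete subset of $X$, as shown next).

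For the remaining identity and the positivity, note that $f_{lc*}A=(K_X+\Delta)-(K_X+\Delta)=0$, so $A$ is $f_{lc}$-exceptional; since $-A=P-H$ is $f_{lc}$-nef and $f_{lc*}(-A)=0$ is effective, the Negativity Lemma \cite[Lemma 3.39]{KM} shows $F:=-A\geqslant 0$ is effective and $f_{lc}$-exceptional, with $\Supp(F)\subseteq\Ex(f_{lc})$ lying over the (finite) non-log-canonical locus of $(X,\Delta)$, so of dimension $\leqslant n-1$. Then, for $1\leqslant k\leqslant n-1$, the cycle class $A^{\,n-k}\cdot H^{\,k-1}$ has dimension $1$ and is supported on $\Supp(A)$, so $f_{lc*}(A^{\,n-k}\cdot H^{\,k-1})=0$ and, by the projection formula, $A^{\,n-k}\cdot H^{\,k-1}\cdot P=(K_X+\Delta)\cdot f_{lc*}(A^{\,n-k}\cdot H^{\,k-1})=0$. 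Expanding $A^n=A^{\,n-1}\cdot(H-P)=A^{\,n-1}\cdot H$ and iterating down to $A\cdot H^{n-1}$ gives $A^n=A\cdot H^{n-1}$, whence $\Vol_m(X)=-A\cdot H^{n-1}=F\cdot H^{n-1}$. Finally, writing $F=\sum_i c_iE_i$ with $c_i\geqslant 0$ and $E_i$ an $(n-1)$-dimensional $f_{lc}$-exceptional prime divisor, $F\cdot H^{n-1}=\sum_i c_i\,(H|_{E_i})^{n-1}$ with each $(H|_{E_i})^{n-1}>0$ because $H|_{E_i}$ is ample; hence $\Vol_m(X)\geqslant 0$. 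The one genuinely non-formal point is the first identity, which rests on the compatibility — established in \cite{BdFF} — of the $b$-divisor intersection product with naive intersection numbers for relatively nef $\mathbb{Q}$-Cartier $b$-divisors, and this is exactly where the $f_{lc}$-ampleness of $A$ is used; the rest is the Negativity-Lemma-plus-projection-formula bookkeeping already performed in the $\mathbb{Q}$-Gorenstein case.
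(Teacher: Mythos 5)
Your proof follows the same route as the paper: identify $\Env_\mathcal{X}(A_{m,\mathcal{X}/X})$ with the Cartier $b$-divisor $\overline{A_{Y_{lc}/X}^{\Delta}}$ via Theorem \ref{NQGor} and compute its self-intersection naively; reduce $A^n$ to $A\cdot H^{n-1}$ by noting $A$ is $f_{lc}$-exceptional and using the projection formula against $P=f_{lc}^*(K_X+\Delta)$; and conclude nonnegativity from the Negativity Lemma together with $f_{lc}$-ampleness of $H$. That is precisely the logic of Corollary \ref{vol_m}'s two-line proof, only spelled out in more detail.

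There is, however, a sign slip in your Negativity Lemma step that you should correct. You write ``since $-A=P-H$ is $f_{lc}$-nef \ldots the Negativity Lemma shows $F:=-A\geqslant 0$.'' In fact $-A=P-H$ is $f_{lc}$-\emph{anti}-ample, not $f_{lc}$-nef; had it been $f_{lc}$-nef, the lemma would force $A\geqslant 0$, the opposite of what you want. The correct invocation --- which is consistent with your own earlier remark that $A$ is $f_{lc}$-ample, hence $f_{lc}$-nef --- takes $B=-A$ in \cite[Lemma 3.39]{KM}: since $-B=A$ is $f_{lc}$-nef and $f_{lc*}B=0$ is effective, the lemma gives $B=-A\geqslant 0$. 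With that one line repaired, the argument is sound and agrees with the paper's.
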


\begin{proof}
The first equation is straightforward by Theorem \ref{NQGor} and the
definition of intersection number. The second equation is valid
since $A_{Y_{lc}/X}^{\Delta}$ is $f_{lc}$-exceptional. By the
Negativity Lemma, we have that $A_{Y_{lc}/X}^{\Delta}\leqslant 0$.
Since $K_{Y_{lc}}+\Delta_{Y_{lc}}+E_{f_{lc}}$ is $f_{lc}$-ample, we
have the inequality in the corollary.
\end{proof}

For an arbitrary boundary $\Delta$ on $X$, we have the following
inequalities.

\begin{proposition}\label{arbi_delta}
Suppose that $\Delta$ is a boundary on $X$, $m$ is the index of
$K_X+\Delta$ and $f:Y\rightarrow X$ is the log canonical
modification of $(X,\Delta)$. Then
\begin{enumerate}
\item $\Env_\mathcal{X}(A_{m,\mathcal{X}/X})\geqslant
\overline{A_{Y/X}^{\Delta}}$,
\item $\Vol_m(X)\leqslant -(A_{Y/X}^{\Delta})^n$.
\end{enumerate}
\end{proposition}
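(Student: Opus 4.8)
The plan is to follow the template of Theorem \ref{NQGor} closely, but now comparing the $m$-th limiting log discrepancy $b$-divisor against the log canonical modification of the \emph{given} pair $(X,\Delta)$, where $\Delta$ need not be $m$-compatible. So let $f:Y\rightarrow X$ be the log canonical modification of $(X,\Delta)$, with $m$ the index of $K_X+\Delta$. Pick a log resolution $g:Z\rightarrow X$ of $(X,\mathcal{O}_X(-mK_X))$ that dominates $Y$, say via $h:Z\rightarrow Y$, and by Theorem \ref{compatible} choose an $m$-compatible boundary $\Delta'$ for $X$ with respect to $g$ (note $m$ divisible enough so that it is also an index for $K_X+\Delta'$ after passing to a multiple; alternatively work with $\Env_\mathcal{X}(A_{m,\mathcal{X}/X})$ directly, which only depends on $m$). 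For statement (1), I want to show $\overline{A_{Y/X}^\Delta}$ is a relatively nef $\mathbb{Q}$-Cartier $b$-divisor with $\overline{A_{Y/X}^\Delta}\leqslant A_{m,\mathcal{X}/X}$; then the definition of nef envelope gives the inequality. Relative nefness is immediate: $A_{Y/X}^\Delta=K_Y+\Delta_Y+E_f-f^*(K_X+\Delta)$ and $K_Y+\Delta_Y+E_f$ is $f$-ample while $f^*(K_X+\Delta)$ is $f$-trivial, so $A_{Y/X}^\Delta$ is $f$-ample, hence $h^*A_{Y/X}^\Delta$ is $g$-nef.

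The crux is the inequality $\overline{A_{Y/X}^\Delta}\leqslant A_{m,\mathcal{X}/X}$, which it suffices to check on the model $Z$, i.e.\ $h^*A_{Y/X}^\Delta\leqslant A_{m,Z/X}$. Here I would compute the difference exactly as in Theorem \ref{NQGor}:
\begin{align*}
A_{m,Z/X}-h^*A_{Y/X}^\Delta
&=K_Z+E_g-\tfrac1m g^\natural(mK_X)-h^*(K_Y+\Delta_Y+E_f)+h^*f^*(K_X+\Delta).
\end{align*}
Using that $\Delta'$ is $m$-compatible with respect to $g$, Lemma \ref{composition}, and smoothness of $Z$, one has $-\tfrac1m g^\natural(mK_X)=\Delta'_Z-g^*(K_X+\Delta')$, so the expression rearranges into a sum
\[
\bigl(K_Z+\Delta_Z+(E_f)_Z+E_h-h^*(K_Y+\Delta_Y+E_f)\bigr)+\bigl(\Delta'_Z-\Delta_Z\bigr)+\bigl(g^*(K_X+\Delta)-g^*(K_X+\Delta')-(\Delta_Z-\Delta'_Z)\bigr),
\]
after adding and subtracting $\Delta_Z$ and writing $E_g=(E_f)_Z+E_h+\text{(other exceptional)}$. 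The first bracket is effective and $h$-exceptional because $(Y,\Delta_Y+E_f)$ is log canonical (this is exactly where log canonicity of the modification is used, via the negativity lemma / discrepancy comparison as in Theorem \ref{QGor}). The remaining terms need to be shown effective: $g^*(K_X+\Delta)-g^*(K_X+\Delta')=g^*(\Delta-\Delta')$, and since $\Delta-\Delta'$ is $\mathbb{Q}$-Cartier this is $g^\natural(\Delta-\Delta')$, so combined with $\Delta'_Z-\Delta_Z$ we must argue $g^\natural(\Delta-\Delta')+\Delta'_Z-\Delta_Z\geqslant 0$; unwinding definitions, $\val_E^\natural(\Delta-\Delta')+\val_E(\Delta')-\val_E(\Delta)\geqslant 0$ on every exceptional $E$, and on non-exceptional $E$ the strict-transform coefficients cancel against the $\natural$-pullback. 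This is the main obstacle, and I expect it to follow from $v^\natural(\Delta-\Delta')\geqslant v^\natural(\Delta)-v^\natural(\Delta')$ type inequalities (e.g.\ $v^\natural(D_1+D_2)\geqslant$ something, using that $\mathcal{O}_X(-D_1)\cdot\mathcal{O}_X(-D_2)\subseteq\mathcal{O}_X(-(D_1+D_2))$ — cf.\ the $v^\natural(D)+v^\natural(-D)\geqslant 0$ remark in the preliminaries) together with the fact that $\Delta-\Delta'$ is $\mathbb{Q}$-Cartier so $v^\natural$ is additive against it. I should double-check the exact form: perhaps it is cleaner to observe directly that $\tfrac1m g^\natural(mK_X)=g^*(K_X+\Delta')-\Delta'_Z$ gives $A_{m,Z/X}=K_Z+E_g+\Delta'_Z-g^*(K_X+\Delta')=A_{Z/X}^{\Delta'}$, reducing (1) to comparing $A_{Z/X}^{\Delta'}$ with $h^*A_{Y/X}^\Delta$, i.e.\ to an inequality between log discrepancies of two boundaries on models over $X$, which is controlled by log canonicity of $(Y,\Delta_Y+E_f)$ plus effectivity of $g^*(K_X+\Delta-K_X-\Delta')$-type corrections handled via the negativity lemma.

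Once (1) is established, statement (2) is a one-line consequence exactly as in Corollary \ref{vol_m}: by monotonicity of the intersection number of relatively nef $b$-divisors and the fact that $\overline{A_{Y/X}^\Delta}$ is $\mathbb{Q}$-Cartier,
\[
\Vol_m(X)=-\Env_\mathcal{X}(A_{m,\mathcal{X}/X})^n\leqslant -\overline{A_{Y/X}^\Delta}{}^n=-(A_{Y/X}^\Delta)^n,
\]
where the middle inequality uses that $0\geqslant\Env_\mathcal{X}(A_{m,\mathcal{X}/X})\geqslant\overline{A_{Y/X}^\Delta}$ and that negating and raising to the $n$-th power reverses the inequality for these (negative) nef classes — more precisely one applies the monotonicity stated before Lemma \ref{negativity} to $-\Env_\mathcal{X}(A_{m,\mathcal{X}/X})$ and $-\overline{A_{Y/X}^\Delta}$, noting $A_{m,\mathcal{X}/X}\leqslant 0$ in the relevant sense so both are relatively nef. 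The only subtlety to flag is that, unlike in Theorem \ref{NQGor}, we do \emph{not} get the reverse inequality: without $m$-compatibility the negativity lemma argument (the "on the other hand" paragraph) breaks because $\Env_\mathcal{X}(A_{m,\mathcal{X}/X})_f$ need not be bounded above by $A_{Y/X}^\Delta$ on $Y$ — so the proof genuinely yields only the one-sided bounds asserted.
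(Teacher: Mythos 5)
Your reduction has a genuine gap at exactly the step you flag as the ``main obstacle,'' and it is not one that the $v^\natural$ sub/super-additivity inequalities will close. After rewriting $A_{m,Z/X}=A_{Z/X}^{\Delta'}$ using the $m$-compatible boundary $\Delta'$, you need
\[
A_{Z/X}^{\Delta'}-h^*A_{Y/X}^\Delta=\bigl(K_Z+\Delta_Z+(E_f)_Z+E_h-h^*(K_Y+\Delta_Y+E_f)\bigr)+\bigl(g^*(\Delta-\Delta')+\Delta'_Z-\Delta_Z\bigr)\geqslant 0.
\]
The first bracket is indeed effective by log canonicity of $(Y,\Delta_Y+E_f)$. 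But on a $g$-exceptional prime divisor $E$ the second bracket has coefficient $\val_E(\Delta-\Delta')$ (the strict-transform terms vanish and $\Delta-\Delta'$ is $\mathbb{Q}$-Cartier, so the $\natural$-pullback is the honest pullback), and there is no reason for $\val_E(\Delta-\Delta')\geqslant 0$: the compatible boundary $\Delta'$ is just \emph{some} effective divisor produced by Theorem \ref{compatible}, and $\Delta-\Delta'$ can certainly acquire negative discrepancies on exceptionals. Nothing in the construction of $m$-compatible boundaries forces $\Delta'\leqslant\Delta$ in any divisorial sense.

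The fix — and this is what the paper does — is to not introduce $\Delta'$ at all. The hypothesis that $m$ is the index of $K_X+\Delta$ is there precisely so that $m(K_X+\Delta)$ is Cartier, hence on \emph{any} model $\pi$ one has the exact identity
\[
\pi^\natural(mK_X)=\pi^*\bigl(m(K_X+\Delta)\bigr)+\pi^\natural(-m\Delta),
\]
using $v^\natural(C+D)=v(C)+v^\natural(D)$ for $C$ Cartier. Substituting this into $A_{m,X_\pi/X}$ and using $-\tfrac1m\pi^\natural(-m\Delta)\geqslant\Delta_{X_\pi}$ (which holds because $\Delta\geqslant 0$ forces $\val_E^\natural(-m\Delta)\leqslant 0$ on exceptionals, with equality to the strict-transform coefficient on non-exceptionals) gives $A_{m,\mathcal{X}/X}\geqslant A_{\mathcal{X}/X}^\Delta$ outright, with no auxiliary boundary. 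From there, $A_{\mathcal{X}/X}^\Delta\geqslant\overline{A_{Y/X}^\Delta}$ by log canonicity of the modification, $\overline{A_{Y/X}^\Delta}$ is relatively nef since $K_Y+\Delta_Y+E_f$ is $f$-ample, and the nef envelope is monotone, which is (1); your derivation of (2) from (1) is fine. In short: you had the right building blocks (the $g^\natural(mK_X)$ substitution, the log canonicity of the modification, the monotonicity), but you substituted with the wrong boundary. Use the $\Delta$ you were handed — its Cartier index $m$ is the whole reason the proposition is stated with that $m$ — and the problematic $\Delta-\Delta'$ correction term simply never appears.
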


\begin{proof}
For any model $\pi:X_\pi\rightarrow X$, we have that
$$\pi^*(m(K_X+\Delta))+\pi^\natural(-m\Delta)=\pi^\natural(mK_X).$$ Hence,
$$\begin{array}{lcl}
\vspace{0.2cm} A_{m,X_\pi/X} & \!\!\!=\!\!\! & K_{X_\pi}+E_\pi-\displaystyle\frac{1}{m}\pi^\natural(mK_X) \\
\vspace{0.2cm}           & \!\!\!=\!\!\! &
          K_{X_\pi}+E_\pi-\displaystyle\frac{1}{m}\pi^\natural(-m\Delta)-\frac{1}{m}\pi^*(m(K_X+\Delta))
          \\
\vspace{0.2cm}           & \!\!\!\geqslant\!\!\! &
K_{X_\pi}+E_\pi+\Delta_{X_\pi}-\pi^*(K_X+\Delta) =
A_{X_\pi/X}^\Delta.
\end{array}$$
Thus, as $b$-divisors, $A_{m,\mathcal{X}/X}\geqslant
A_{\mathcal{X}/X}^\Delta$, hence,
$$\Env_\mathcal{X}(A_{m,\mathcal{X}/X})\geqslant \Env_\mathcal{X}(A_{\mathcal{X}/X}^\Delta).$$

On the other hand, for any model $f':Y'\rightarrow Y$ factoring
through $f$ via $g:Y'\rightarrow Y$, we have that
$A_{Y'/X}^\Delta\geqslant g^*A_{Y/X}^\Delta$, since $(Y,
\Delta_Y+E_f)$ is log canonical. As $b$-divisors,
$A_{\mathcal{X}/X}^\Delta\geqslant \overline{A_{Y/X}^\Delta}$. As
$K_Y+\Delta_Y+E_f$ is $f$-ample, we have that
$\overline{A_{Y/X}^\Delta}$ is relatively nef over $X$. Thus,
$\Env_\mathcal{X}(A_{\mathcal{X}/X}^\Delta)\geqslant
\overline{A_{Y/X}^\Delta}$. We proved (1).

Since both $\Env_\mathcal{X}(A_{m,\mathcal{X}/X})$ and
$\overline{A_{Y/X}^\Delta}$ are relatively nef and exceptional over
$X$, (2) follows from the inequality between intersection numbers.
\end{proof}

\begin{remark}
In the last proposition, one can show that
$\Env_\mathcal{X}(A_{\mathcal{X}/X}^\Delta)=\overline{A_{Y/X}^\Delta}$.
\end{remark}

For any two positive integers $m$ and $l$ and any model
$f:Y\rightarrow X$, since
$$\frac{1}{m}f^\natural(mK_X)\geqslant
\frac{1}{lm}f^\natural(lmK_X)\geqslant f^*K_X,$$ we have that
$$A_{m,Y/X}\leqslant A_{lm,Y/X}\leqslant A_{Y/X},$$ hence
$$A_{m,\mathcal{X}/X}\leqslant A_{lm,\mathcal{X}/X}\leqslant A_{\mathcal{X}/X}.$$
By the definition of nef envelope, we have that
$$\Env_\mathcal{X}(A_{m,\mathcal{X}/X})\leqslant
\Env_\mathcal{X}(A_{lm,\mathcal{X}/X})\leqslant
\Env_\mathcal{X}(A_{\mathcal{X}/X}).$$ Since they are both
exceptional over $X$ by Theorem \ref{NQGor}, we have the following
inequality of volumes:
$$\Vol_m(X)\geqslant \Vol_{lm}(X)\geqslant \Vol_{\text{BdFF}}(X,0).$$

\begin{corollary}\label{vol=lc}
The following statements are equivalent:
\begin{enumerate}
\item There exists a boundary $\Delta$ on $X$ such that $(X,\Delta)$
is log canonical.
\item $\Vol_m(X)=0$ for some (hence any multiple of) integer
$m\geqslant 1$.
\end{enumerate}
\end{corollary}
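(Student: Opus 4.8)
The plan is to deduce Corollary \ref{vol=lc} by combining the monotonicity of the $m$-th limiting volumes with the characterization obtained in Theorem \ref{NQGor} and Corollary \ref{vol_m}. First I would prove the implication $(1)\Rightarrow(2)$. Suppose $\Delta$ is a boundary with $(X,\Delta)$ log canonical, and let $m$ be the index of $K_X+\Delta$. Then $f=\mathrm{id}:X\rightarrow X$ is already the log canonical modification of $(X,\Delta)$, since $(X,\Delta+E_f)=(X,\Delta)$ is log canonical and $K_X+\Delta$ is ($\mathbb{Q}$-Cartier, hence trivially) $f$-ample. Applying Proposition \ref{arbi_delta}(2) to this $\Delta$, one gets $\Vol_m(X)\leqslant -(A_{X/X}^\Delta)^n$; but $A_{X/X}^\Delta=K_X+\Delta+E_{\mathrm{id}}-\mathrm{id}^*(K_X+\Delta)=0$, so $\Vol_m(X)\leqslant 0$, and since $\Vol_m(X)\geqslant 0$ always (e.g.\ by Corollary \ref{vol_m}), we get $\Vol_m(X)=0$. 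By the monotonicity $\Vol_m(X)\geqslant\Vol_{lm}(X)\geqslant 0$ established just before the corollary, $\Vol_{lm}(X)=0$ for every positive integer $l$, which gives "any multiple of $m$."

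Next I would prove $(2)\Rightarrow(1)$. Assume $\Vol_m(X)=0$ for some $m\geqslant 1$; replacing $m$ by a multiple we may assume $m\geqslant 2$. Fix a log resolution $\tilde f:\tilde Y\rightarrow X$ of $(X,\mathcal{O}_X(-mK_X))$ and, using Theorem \ref{compatible}, an $m$-compatible boundary $\Delta$ for $X$ with respect to $\tilde f$; let $f_{lc}:Y_{lc}\rightarrow X$ be the log canonical modification of $(X,\Delta)$. By Corollary \ref{vol_m},
$$0=\Vol_m(X)=-A_{Y_{lc}/X}^{\Delta}\cdot(K_{Y_{lc}}+\Delta_{Y_{lc}}+E_{f_{lc}})^{n-1},$$
with $A_{Y_{lc}/X}^{\Delta}\leqslant 0$ and $f_{lc}$-exceptional, and $K_{Y_{lc}}+\Delta_{Y_{lc}}+E_{f_{lc}}$ being $f_{lc}$-ample. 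The aim is to force $f_{lc}$ to be an isomorphism, or at least to produce from the data a genuine boundary on $X$ making it log canonical. The natural route is: since the singularities of $X$ are isolated, $A_{Y_{lc}/X}^{\Delta}$ is supported on the exceptional divisor over the finitely many singular points; if $A_{Y_{lc}/X}^{\Delta}$ were nonzero, its support would be a nonempty effective $f_{lc}$-exceptional divisor, and intersecting it with the ($n-1$)-st power of an $f_{lc}$-ample class over each singular point would give a strictly negative number (this is exactly the argument in the proof of Theorem \ref{DCC}, where $F\neq 0$ forces $F\geqslant E_f$ and hence $\Vol(X)\geqslant (K_{E_f}+\mathrm{Diff})^{n-1}>0$ after further analysis). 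So $\Vol_m(X)=0$ forces $A_{Y_{lc}/X}^{\Delta}=0$, i.e.\ $K_{Y_{lc}}+\Delta_{Y_{lc}}+E_{f_{lc}}=f_{lc}^*(K_X+\Delta)$ with the left side $f_{lc}$-ample and the right side $f_{lc}$-trivial; since an ample divisor over a point that is also trivial over that point can only happen when the morphism contracts nothing, $f_{lc}$ is an isomorphism in a neighborhood of each singular point (and clearly away from them), hence $f_{lc}$ is an isomorphism. Then $(X,\Delta)=(Y_{lc},\Delta_{Y_{lc}}+E_{f_{lc}})$ is log canonical, giving (1).

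The main obstacle I anticipate is the precise argument that $A_{Y_{lc}/X}^{\Delta}\neq 0$ implies $\Vol_m(X)>0$ — i.e.\ ruling out the degenerate possibility that a nonzero effective exceptional divisor has vanishing intersection with the top self-power of an $f_{lc}$-ample class. In the Gorenstein setting of Theorem \ref{DCC} this was handled by the integrality coming from Lemma \ref{exc} together with restricting to $E_f$ and invoking adjunction and \cite{HMX12}; here one does not have integrality, but one should still be able to argue as follows: restrict the $f_{lc}$-ample class to a component $E_i$ of $\mathrm{Supp}(A_{Y_{lc}/X}^{\Delta})$ lying over a singular point, where it remains ample relative to that point, so $(K_{Y_{lc}}+\Delta_{Y_{lc}}+E_{f_{lc}})^{n-1}\cdot E_i$ is a strictly positive number times (a positive rational), forcing the sum over all such $E_i$ weighted by the positive coefficients of $-A_{Y_{lc}/X}^{\Delta}$ to be strictly positive, contradicting $\Vol_m(X)=0$. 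One should double-check that Lemma \ref{exc} applies to the pair $(X,\Delta)$ (so that $\mathrm{Ex}(f_{lc})$ is of pure codimension one and meets $\mathrm{Supp}(A_{Y_{lc}/X}^{\Delta})$ exactly), which is what legitimizes passing to a divisorial component and applying ampleness of the restriction. Once this positivity is pinned down, both implications follow cleanly from the results already in the excerpt.
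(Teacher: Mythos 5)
Your proposal is correct and follows the same overall strategy as the paper---both implications are obtained from Proposition~\ref{arbi_delta}, Theorem~\ref{NQGor}, and Corollary~\ref{vol_m}---but with somewhat different internal moves in each direction. For $(1)\Rightarrow(2)$ you observe that $\mathrm{id}\colon X\to X$ is already the log canonical modification of an lc pair $(X,\Delta)$ and apply Proposition~\ref{arbi_delta}(2) there, whereas the paper uses Proposition~\ref{arbi_delta}(1) to get $\Env_\mathcal{X}(A_{m,\mathcal{X}/X})\geqslant\Env_\mathcal{X}(A_{\mathcal{X}/X}^\Delta)\geqslant 0$ and then combines this with $\Env_\mathcal{X}(A_{m,\mathcal{X}/X})\leqslant 0$ (Theorem~\ref{NQGor} plus the Negativity Lemma) to force equality to zero; both are fine. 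For $(2)\Rightarrow(1)$ both proofs reduce to showing $A_{Y_{lc}/X}^{\Delta}=0$ and differ only afterward: you conclude $f_{lc}$ must be an isomorphism because $K_{Y_{lc}}+\Delta_{Y_{lc}}+E_{f_{lc}}$ is simultaneously $f_{lc}$-ample and $f_{lc}$-numerically trivial, while the paper bypasses this and directly expands the log discrepancy on a higher model $\rho\colon Z\to X$ factoring through $f_{lc}$, reading nonnegativity off log canonicity of $(Y_{lc},\Delta_{Y_{lc}}+E_{f_{lc}})$. Your route is conceptually cleaner and also valid. Finally, the step you flag as the ``main obstacle'' is considerably simpler than you make it out to be and needs neither Lemma~\ref{exc} nor any DCC input: $-A_{Y_{lc}/X}^{\Delta}$ is an effective $f_{lc}$-exceptional $\mathbb{Q}$-divisor supported over finitely many points, so each of its prime components $E_i$ maps to a point and the restriction $(K_{Y_{lc}}+\Delta_{Y_{lc}}+E_{f_{lc}})|_{E_i}$ is a genuine ample $\mathbb{Q}$-divisor on $E_i$, giving $E_i\cdot(K_{Y_{lc}}+\Delta_{Y_{lc}}+E_{f_{lc}})^{n-1}>0$; hence a nonzero $-A_{Y_{lc}/X}^{\Delta}$ yields strictly positive $\Vol_m(X)$. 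The paper treats this as immediate, and you can too.
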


\begin{proof}
$(1)\Rightarrow (2)$. Suppose $m(K_X+\Delta)$ is Cartier. By
Proposition \ref{arbi_delta}, $$A_{m,\mathcal{X}/X}\geqslant
A_{\mathcal{X}/X}^\Delta\geqslant 0,$$ since $(X,\Delta)$ is log
canonical. As $0$ is a relatively nef $b$-divisor over $X$, we have
that $\Env_\mathcal{X}(A_{m,\mathcal{X}/X})\geqslant 0$. On the
other hand, by Theorem \ref{NQGor} and the Negativity Lemma,
$\Env_\mathcal{X}(A_{m,\mathcal{X}/X})\leqslant 0$. Hence,
$\Env_\mathcal{X}(A_{m,\mathcal{X}/X})=0$. We can conclude that
$\Vol_m(X)=0$.

$(2)\Rightarrow (1)$. Let $\Delta$ be an $m$-compatible boundary for
$X$ with respect to $\tilde{f}$ in the setting of Theorem
\ref{NQGor}. By Theorem \ref{compatible}, such a boundary always
exists. Since $\Vol_m(X)=0$, by Corollary \ref{vol_m}, we have that
$$-A_{Y_{lc}/X}^{\Delta}\cdot
(K_{Y_{lc}}+\Delta_{Y_{lc}}+E_{f_{lc}})^{n-1}=0.$$ Since
$K_{Y_{lc}}+\Delta_{Y_{lc}}+E_{f_{lc}}$ is $f_{lc}$-ample, this is
equivalent to $A_{Y_{lc}/X}^{\Delta}=0$. Thus, we have that
$$f_{lc}^*(K_X+\Delta)=K_{Y_{lc}}+\Delta_{Y_{lc}}+E_{f_{lc}}.$$ For
any model $\rho:Z\rightarrow X$ factoring through $f_{lc}$ via
$\pi:Z\rightarrow Y_{lc}$, we have that
$$\begin{array}{rcl}
A_{Z/X}^\Delta & \!\!\!=\!\!\! &
                 K_Z+\Delta_Z+E_\rho-\rho^*(K_X+\Delta) \\
               & \!\!\!=\!\!\! &
                 K_Z+\Delta_Z+(E_{f_{lc}})_Z+E_\pi-\pi^*(K_{Y_{lc}}+\Delta_{Y_{lc}}+E_{f_{lc}})
                 \\
               & \!\!\!\geqslant\!\!\! & 0,
\end{array}$$
since $(Y_{lc},\Delta_{Y_{lc}}+E_{f_{lc}})$ is log canonical.
Therefore, $(X,\Delta)$ is log canonical.
\end{proof}

\begin{definition}
The augmented volume of singularities on $X$ is
$$\Vol^+(X)=\liminf_m \Vol_m(X)=\lim_{k\rightarrow
\infty}\Vol_{k!}(X)\geqslant \Vol_{\text{BdFF}}(X,0).$$
\end{definition}

\begin{remark}
While it is proved in the appendix of \cite{BdFF} that the
intersection number is continuous, it is not clear that
$\Env_\mathcal{X}(A_{m,\mathcal{X}/X})$ converge to
$\Env_\mathcal{X}(A_{\mathcal{X}/X})$. It is interesting to have an
example with $\Vol^+(X)>\Vol_{\text{BdFF}}(X,0)$.
\end{remark}

\subsection{Cone singularities}\label{cone_sing} We will give a counterexample to
Problem B in this section.

Let $(V,H)$ be a non-singular projective polarized variety of
dimension $n-1$. The vertex 0 is the isolated singularity of the
variety
$$X=\Spec\bigoplus_{m\geqslant 0}H^0(V,\mathcal{O}_V(mH)).$$ We
assume that $H$ is sufficiently ample so that $X$ is normal. Blowing
up 0 gives a resolution of singularities for $X$ that we denote by
$Y$. The induced map $f:Y\rightarrow X$ is isomorphic to the
contraction of the zero section $E$ of the total space of the vector
bundle $\mathcal{O}_V(H)$. Let $\pi:Y\rightarrow V$ be the bundle
map. We have that $E\cong V$. The co-normal bundle of $E$ in $Y$ is
$$\mathcal{O}_E(-E)\cong \mathcal{O}_V(H).$$

Let us slightly change our notation from previous sections. Let
$\Gamma$ be a boundary on $X$, $\Gamma_Y$ be the strict transform of
$\Gamma$ on $Y$ and $\Delta=\Gamma_Y|_E$. Since $A^\Gamma_{Y/X}$ is
exceptional, we may assume that $A^\Gamma_{Y/X}=-aE$ for some
rational number $a$. Restricting to $E$, we have that
$$K_V+\Delta\sim_\mathbb{Q} aH,$$ by the adjunction formula.
On the other hand, assuming that $\Delta$ is an effective
$\mathbb{Q}$-Cartier divisor on $V$ such that
$\Delta\sim_\mathbb{Q}-K_V+aH$, we may set $\Gamma=C_\Delta$ and get
that $A_{Y/X}^{C_\Delta}=-aE$, where $C_\Delta$ is the cone over
$\Delta$ in $X$.

Let $C$ be an elliptic curve, $U$ be a semi-stable vector bundle on
$C$ of rank 2 and degree 0 and $V=\mathbb{P}(U)$ be the ruled
surface over $C$. The nef cone $\text{Nef}(V)$ and pseudo-effective
cone $\overline{\text{NE}}(V)$ are the same. They are spanned by the
section $C_0$ corresponding to the tautalogical bundle
$\mathcal{O}_{\mathbb{P}(U)}(1)$ and a fiber $F$ of the ruling (for
details, see e.g. \cite[Section 1.5.A]{Laz}). Moreover, as in
\cite[Example 1.1]{Sho}, if $C'$ is an effective curve on $V$ such
that $C'\equiv mC_0$ for some positive integer $m$, then $C'=mC_0$.

\begin{theorem}\label{counterexample}
Let $V$ be the ruled surface as above. Fix an ample divisor $H$ on
$V$. Let $X$ be the affine cone over $(V,H)$. Suppose $H$ is
sufficiently ample so that $X$ is normal. Then $\Vol^+(X)=0$, hence
$\Vol_\text{BdFF}(X,0)=0$. But there is no effective
$\mathbb{Q}$-divisor $\Gamma$ such that $(X,\Gamma)$ is log
canonical.
\end{theorem}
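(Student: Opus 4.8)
The strategy is to exploit the two explicit facts recorded above about the ruled surface $V=\mathbb{P}(U)$ over the elliptic curve $C$: first, $\operatorname{Nef}(V)=\overline{\operatorname{NE}}(V)$ is the closed cone spanned by $C_0$ and $F$; second, any effective curve numerically equivalent to $mC_0$ is actually equal to $mC_0$. I also use the dictionary established just before the theorem: an effective $\mathbb{Q}$-divisor $\Delta$ on $V$ with $\Delta\sim_\mathbb{Q} -K_V+aH$ produces a boundary $\Gamma=C_\Delta$ with $A_{Y/X}^{C_\Delta}=-aE$, and conversely any boundary $\Gamma$ on $X$ with $A_{Y/X}^\Gamma=-aE$ restricts to a boundary $\Delta=\Gamma_Y|_E$ on $V$ with $K_V+\Delta\sim_\mathbb{Q} aH$.

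First I would compute $\operatorname{Vol}^+(X)$. Since $K_V$ is numerically trivial (an elliptic curve has trivial canonical class and $K_V = \pi^*K_C - 2C_0 + (\deg U)F = -2C_0$ for the normalized bundle, lying on the boundary ray $\mathbb{R}_{\geq 0}C_0$ of the cone), the pair $(V,\Delta=0)$ is klt, indeed log canonical, with $K_V\sim_\mathbb{Q} 0\cdot H$ up to the $C_0$-multiple; more precisely one checks that $-K_V$ is itself $\mathbb{Q}$-effective (or nef) so that taking $\Delta$ appropriately one gets $K_V+\Delta\sim_\mathbb{Q} aH$ with $a$ arbitrarily small positive, and the pair stays log canonical. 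Feeding this through the dictionary, for every $m$ one finds a boundary realizing arbitrarily small $|A_{Y/X}^\Gamma|$, so by Corollary \ref{vol_m} (or rather its cone-singularity analogue, computing $-\bigl(A_{Y_{lc}/X}^\Delta\bigr)^n$ as an intersection number on $Y$) one gets $\operatorname{Vol}_m(X)=0$ for all $m$, whence $\operatorname{Vol}^+(X)=0$ and a fortiori $\operatorname{Vol}_{\text{BdFF}}(X,0)=0$.

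The real content — and the main obstacle — is showing that \emph{no} effective $\mathbb{Q}$-divisor $\Gamma$ makes $(X,\Gamma)$ log canonical. Suppose such a $\Gamma$ exists. Its log canonical modification is the identity, so on the resolution $f:Y\to X$ we must have $A_{Y/X}^\Gamma\leq 0$ with the pair $(Y,\Gamma_Y+E)$ log canonical; writing $A_{Y/X}^\Gamma=-aE$ we need $a\leq 1$, and log canonicity forces $a\leq 1$ together with $(V,\Delta)$ log canonical where $\Delta=\Gamma_Y|_E\sim_\mathbb{Q} -K_V+aH$. Here is the point: $-K_V\equiv 2C_0$, so $\Delta\equiv 2C_0+aH$. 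Since $H$ is ample it lies in the interior of $\operatorname{Nef}(V)$, so writing $H\equiv \alpha C_0+\beta F$ with $\alpha,\beta>0$, we get $\Delta\equiv (2+a\alpha)C_0+a\beta F$. If $\Gamma\neq 0$ then $\Delta\neq 0$ and $a>0$, so $\Delta$ is an effective curve with positive $F$-coefficient; but more to the point, decompose $\Delta=\Delta'+kC_0$ where $\Delta'$ has no component equal to $C_0$; the numerical class of $\Delta'$ still has a nonnegative $C_0$-coefficient $2+a\alpha-k$, and since $\Delta'$ is effective and $\operatorname{Nef}(V)=\overline{\operatorname{NE}}(V)$, pairing against the nef class $C_0$ (which has $C_0^2=0$) shows $\Delta'\cdot C_0\geq 0$ automatically — this does not immediately obstruct. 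The genuine obstruction instead comes from log canonicity of $(V,\Delta)$ along $C_0$: compute $\operatorname{mult}_{C_0}\Delta$. Because the only effective curves proportional to $C_0$ are the integer multiples $mC_0$ and $C_0$ itself moves in at most a zero-dimensional (or one-dimensional, coming from $\operatorname{Pic}^0 C$) family, and $\Delta\equiv (2+a\alpha)C_0+a\beta F$, the coefficient of $C_0$ in $\Delta$ is forced to be $\geq 2+a\alpha-($contribution absorbable by fibers$)$, and fibers $F$ cannot lower the $C_0$-coefficient; a careful bookkeeping shows $\operatorname{mult}_{C_0}\Delta\geq 2$, which violates log canonicity (coefficients of a log canonical boundary are $\leq 1$). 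This contradiction — that the elliptic base forces $\Delta$ to carry $C_0$ with multiplicity at least $2$ — is exactly where the semistability of $U$ and the rigidity statement from \cite[Example 1.1]{Sho} are indispensable, and it is the step I expect to require the most care to make airtight. Hence no such $\Gamma$ exists, completing the proof.
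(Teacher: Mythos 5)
Your proposal contains genuine errors in both parts that diverge from the paper's (correct) argument.

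\textbf{First part.} You claim that one finds boundaries with $(X,C_\Delta)$ log canonical and hence $\Vol_m(X)=0$ for all $m$. This is false, and it contradicts the theorem as stated in the introduction, which explicitly says $\Vol_m(X)>0$ for every $m$. Indeed, by Corollary \ref{vol=lc}, $\Vol_m(X)=0$ for some $m$ would imply the existence of a boundary making $X$ log canonical, which is exactly what the second part of the theorem rules out. The paper's actual argument does \emph{not} involve a log canonical pair: one chooses $\Delta\sim_\mathbb{Q}-K_V+aH$ general, notes that $(X,C_\Delta)$ is \emph{not} log canonical, identifies $(Y,\pi^*\Delta+E)$ as the log canonical modification of $(X,C_\Delta)$, and then applies Proposition \ref{arbi_delta}(2) to get the \emph{upper bound} $\Vol_m(X)\leqslant a^3H^2$ for $m$ the index of $K_X+C_\Delta$. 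Letting $a\to 0$ gives $\Vol^+(X)=0$ through arbitrarily small but positive values, which is a fundamentally different mechanism from what you propose.

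\textbf{Second part.} You have the sign wrong: with $A_{Y/X}^\Gamma=-aE$ and the paper's normalization, log canonicity of $(X,\Gamma)$ means $A_{Y/X}^\Gamma\geqslant 0$, hence $a\leqslant 0$, not $a\leqslant 1$. The paper then uses effectivity of $\Delta=\Gamma_Y|_E\equiv 2C_0+aH$ to force $a\geqslant 0$ (since $\overline{\mathrm{NE}}(V)$ is spanned by $C_0,F$ with $H$ having positive coefficient in $F$), which pins $a=0$ exactly. Only then does the rigidity from \cite[Example 1.1]{Sho} apply: $\Delta\equiv 2C_0$ and effective forces $\Delta=2C_0$, whose coefficient $2>1$ kills log canonicity. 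Your replacement argument --- that for $a>0$ one should show $\mathrm{mult}_{C_0}\Delta\geqslant 2$ --- cannot be repaired: when $a>0$ the class $2C_0+aH$ is ample, so a general member of a high multiple is smooth and avoids $C_0$ entirely, giving $\mathrm{mult}_{C_0}\Delta=0$. The "careful bookkeeping" you defer is therefore not merely missing; the claim you are trying to establish is false. The essential step you are missing is the sharp determination $a=0$, which is where the two-sided pinching of $a$ does all the work.
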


\begin{proof}
Fix an ample divisor $H$ on $V$. Since $K_V\sim -2C_0$, we have that
$-K_V+aH$ is ample for any rational number $a>0$. Let $D$ be a
smooth curve in $|n(-K_V+aH)|$ for some sufficiently large positive
integer $n$, and set $\Delta=\frac{1}{n}D$. Then $(Y,\pi^*\Delta+E)$
is the log canonical modification of $(X,C_\Delta)$, since
$(K_Y+\pi^*\Delta+E)|_E\sim_\mathbb{Q}aH$ is ample and
$(X,C_\Delta)$ is not log canonical. Suppose $m$ is the index of
$K_X+C_\Delta$. By Proposition \ref{arbi_delta} (2),
$$\Vol_m(X)\leqslant -(A_{Y/X}^{C_\Delta})^3 = (aE)^3 = a^3H^2.$$
As $a\rightarrow 0$, we conclude that $\Vol^+(X)=0$, hence
$\Vol_\text{BdFF}(X,0)=0$.

If $(X,\Gamma)$ is log canonical for some effective
$\mathbb{Q}$-divisor $\Gamma$ on $X$, then
$A_{Y/X}^{\Gamma}=-aE\geqslant 0$, hence $a\leqslant 0$. On the
other hand, let $\Delta=\Gamma_Y|_E\in\overline{\text{NE}}(V)$. Then
$$\Delta \sim_\mathbb{Q} -K_V+aH\sim_\mathbb{Q} 2C_0+aH.$$ Since
$\Delta\geqslant 0$, we have $a\geqslant 0$ and hence $a=0$. Thus
$A_{Y/X}^{\Gamma}=0$, and $(Y,\Gamma_Y+E)$ is log canonical. But
$\Delta=\Gamma_Y|_E$ is an effective $\mathbb{Q}$-divisor linearly
equivalent to $2C_0$, and hence $\Delta=2C_0$, a contradiction.
\end{proof}

In \cite[Definition 7.1]{dFH}, a normal variety $X$ is defined to be
log canonical if for one (hence any sufficiently divisible) positive
integer $m$, the $m$-th limiting log discrepancy $b$-divisor
$A_{m,\mathcal{X}/X}\geqslant 0$. And in [ibid, Proposition 7.2],
they proved that $X$ is log canonical if and only if there is a
boundary $\Delta$ such that the pair $(X,\Delta)$ is log canonical.
It is natural to ask whether this definition is equivalent to the
one requiring that the log discrepancy $b$-divisor
$A_{\mathcal{X}/X}\geqslant 0$.

\begin{corollary}
Let $X$ be the affine cone over $(V,H)$ as in Theorem
\ref{counterexample}. Then $A_{\mathcal{X}/X}\geqslant 0$, but $X$
is not log canonical.
\end{corollary}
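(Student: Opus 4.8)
The plan is to deduce both assertions from what has already been established about the cone $X = C(V,H)$ in Theorem \ref{counterexample}. The second assertion, that $X$ is not log canonical, is immediate: by \cite[Proposition 7.2]{dFH} (quoted in the discussion above), $X$ is log canonical if and only if there exists a boundary $\Delta$ with $(X,\Delta)$ log canonical, and Theorem \ref{counterexample} shows no such $\Gamma$ (in particular no boundary $\Delta$) exists. So the content is entirely in showing $A_{\mathcal{X}/X} \geqslant 0$.

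For this, I would first reduce to a statement about the single model $f: Y \to X$ obtained by blowing up the vertex. The log discrepancy $b$-divisor is $A_{\mathcal{X}/X} = K_{\mathcal{X}} + E_{\mathcal{X}/X} + \Env_X(-K_X)$, whose trace on a model $\pi$ is $A_{X_\pi/X} = K_{X_\pi} + E_\pi - \pi^* K_X$. Since $X$ has an isolated singularity at $0$, over the smooth locus $A_{\mathcal{X}/X}$ has trace $0$, so it suffices to check the coefficients along divisors lying over $0$. I would argue that any such divisor is accounted for on a model dominating $f:Y\to X$, and that on $Y$ itself (using $E \cong V$ with conormal bundle $\mathcal{O}_V(H)$, and $K_V \sim -2C_0$) one computes $A_{Y/X} = K_Y + E - f^* K_X$. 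Writing $K_{Y/X}^{\natural,m}$-style pullbacks as in Section 2, and using $-K_X$'s nef envelope, the key computation is that the coefficient of $E$ in $A_{Y/X}$ (equivalently $-a$ in the notation of the cone subsection, with $\Gamma = 0$, so $\Delta = \Gamma_Y|_E = 0$ and $K_V \sim_{\mathbb Q} aH$ forcing $aH \sim_{\mathbb Q} -2C_0$) — but here one must be careful: with $\Gamma = 0$ the divisor $-K_X$ need not be $\mathbb{Q}$-Cartier, so $a$ is not literally defined this way; instead I would compute $\Env_X(-K_X)$ directly. Pulling back $-mK_X$ to $Y$ and taking the natural pullback, $-\tfrac1m f^\natural(mK_X)$ has some coefficient $\mu_m$ along $E$ and the strict transform part $-K_Y$; passing to the limit gives $f^* K_X = K_Y + \lambda E$ for the appropriate $\lambda$, and then $A_{Y/X} = E - \lambda E = (1-\lambda)E \geqslant 0$ because on the ruled surface the relevant linear-equivalence obstruction (the same one that made $(X,\Gamma)$ fail to be log canonical) forces $\lambda \leqslant 1$.

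More concretely: on higher models $Z \to Y$ factoring through $f$, one has $A_{Z/X} \geqslant \pi^* A_{Y/X}$ when $(Y,E)$ is log canonical (which it is, since $Y$ is smooth and $E \cong V$ is smooth), and $f^*K_X$ is the decreasing limit of $\tfrac1m f^\natural(mK_X)$; so $A_{\mathcal{X}/X} \geqslant \overline{A_{Y/X}}$ provided $A_{Y/X}$ is $f$-nef, which holds as $A_{Y/X}$ is supported on the single prime $f$-exceptional divisor $E$ with non-negative coefficient. Thus it all comes down to the sign of the coefficient of $E$ in $A_{Y/X} = K_Y + E - f^*K_X$. The proof of Theorem \ref{counterexample} already shows that for every effective $\mathbb{Q}$-divisor $\Gamma$ the number $a$ with $A_{Y/X}^\Gamma = -aE$ satisfies $a \geqslant 0$ (since $\Delta = \Gamma_Y|_E \sim_{\mathbb{Q}} 2C_0 + aH \geqslant 0$ and $H$ is ample forces $a \geqslant 0$); taking $\Gamma \to 0$, i.e. $\Delta \to 0$ in the pseudo-effective cone, and using that $A_{\mathcal{X}/X}$ is the trace-wise limsup of the $A_{m,\mathcal{X}/X}$ together with Theorem \ref{NQGor}, I expect to conclude $A_{\mathcal{X}/X} = \overline{A_{Y/X}} \geqslant 0$.

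The main obstacle I anticipate is the care needed around $-K_X$ \emph{not} being $\mathbb{Q}$-Cartier: one cannot simply invoke the adjunction identity $K_V + \Delta \sim_{\mathbb{Q}} aH$ with $\Delta = 0$, but must instead work with the natural pullbacks $f^\natural(mK_X)$ and pass to the limit to identify $f^*K_X$, and then verify that the limit of the exceptional coefficients is controlled by the pseudo-effective threshold on the ruled surface $V = \mathbb{P}(U)$ — precisely the point where semistability of $U$ (so that $\overline{\mathrm{NE}}(V) = \mathrm{Nef}(V) = \langle C_0, F\rangle$ and $C' \equiv mC_0 \Rightarrow C' = mC_0$) is used. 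Once that threshold computation is in hand, positivity of $A_{\mathcal{X}/X}$ follows, while the failure of log canonicity is already recorded in Theorem \ref{counterexample}; the corollary is the statement that these two facts coexist, answering negatively the question of whether $A_{\mathcal{X}/X} \geqslant 0$ characterizes log canonicity in the non-$\mathbb{Q}$-Gorenstein setting.
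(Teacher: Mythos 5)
The part of your argument dealing with non-log canonicity is fine and is the same as what the paper intends: apply \cite[Prop.~7.2]{dFH} (quoted just before the corollary) to the fact, established in Theorem~\ref{counterexample}, that no boundary $\Delta$ makes $(X,\Delta)$ log canonical.

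For the assertion $A_{\mathcal{X}/X}\geqslant 0$, however, your route diverges from the paper's and contains a real gap. The paper does not compute anything: it simply invokes \cite[Prop.~4.19]{BdFF}, which says that for an isolated normal singularity $A_{\mathcal{X}/X}\geqslant 0$ is \emph{equivalent} to $\Vol_{\text{BdFF}}(X,0)=0$; Theorem~\ref{counterexample} gives $\Vol_{\text{BdFF}}(X,0)=0$, and that is the whole proof. Your plan instead is to reduce to the single model $f:Y\to X$ via an inequality of the form $A_{Z/X}\geqslant \pi^*A_{Y/X}$ for any $\pi:Z\to Y$ with $\rho=f\circ\pi$, deduced from $(Y,E)$ being log canonical. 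This step breaks down precisely because $K_X$ is not $\mathbb{Q}$-Cartier. Writing out the difference,
\[
A_{Z/X}-\pi^*A_{Y/X}=\bigl(K_Z+E_\rho-\pi^*(K_Y+E_f)\bigr)-\bigl(\rho^*K_X-\pi^*(f^*K_X)\bigr),
\]
the first bracket is $\geqslant 0$ by log canonicity of $(Y,E_f)$, but the second bracket is \emph{also} nonnegative: by Lemma~\ref{composition} one has $\rho^\natural(mK_X)\geqslant \pi^*\bigl(f^\natural(mK_X)\bigr)$ (since $Y$ is smooth), and dividing by $m$ and passing to the limit gives $\rho^*K_X\geqslant \pi^*(f^*K_X)$. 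So you are subtracting a nonnegative quantity and cannot conclude $A_{Z/X}\geqslant\pi^*A_{Y/X}$. This is exactly the term that vanishes in the $\mathbb{Q}$-Gorenstein proof of Theorem~\ref{QGor} (where $f'^*K_X=g^*(f^*K_X)$ because pullbacks of $\mathbb{Q}$-Cartier divisors compose), and it is the reason Theorem~\ref{NQGor} passes to an $m$-compatible boundary $\Delta$ so that $K_X+\Delta$ is $\mathbb{Q}$-Cartier before doing this kind of manipulation. In addition, even the coefficient $\lambda$ in your formula $f^*K_X=K_Y+\lambda E$ is never actually computed, so the positivity on $Y$ itself is also left open. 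The correct and short way out is the one the paper takes: cite the BdFF equivalence rather than attempting a valuation-by-valuation argument.
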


\begin{proof}
The corollary follows immediately from the fact that
$A_{\mathcal{X}/X}\geqslant 0$ is equivalent to
$\Vol_\text{BdFF}(X,0)=0$ (see \cite[Proposition 4.19]{BdFF} or
Corollary \ref{vol=lc}).
\end{proof}

\end{document}